\documentclass[12pt]{amsart}

\usepackage{amssymb}

\usepackage[mathscr]{eucal}

\input xypic
\xyoption{all}

\makeindex
\makeglossary

\begin{document}

\baselineskip = 16pt

\newcommand \ZZ {{\mathbb Z}}
\newcommand \NN {{\mathbb N}}
\newcommand \RR {{\mathbb R}}
\newcommand \PR {{\mathbb P}}
\newcommand \AF {{\mathbb A}}
\newcommand \GG {{\mathbb G}}
\newcommand \QQ {{\mathbb Q}}
\newcommand \CC {{\mathbb C}}
\newcommand \bcA {{\mathscr A}}
\newcommand \bcC {{\mathscr C}}
\newcommand \bcD {{\mathscr D}}
\newcommand \bcF {{\mathscr F}}
\newcommand \bcG {{\mathscr G}}
\newcommand \bcH {{\mathscr H}}
\newcommand \bcM {{\mathscr M}}
\newcommand \bcI {{\mathscr I}}
\newcommand \bcJ {{\mathscr J}}
\newcommand \bcK {{\mathscr K}}
\newcommand \bcL {{\mathscr L}}
\newcommand \bcO {{\mathscr O}}
\newcommand \bcP {{\mathscr P}}
\newcommand \bcQ {{\mathscr Q}}
\newcommand \bcR {{\mathscr R}}
\newcommand \bcS {{\mathscr S}}
\newcommand \bcT {{\mathscr T}}
\newcommand \bcV {{\mathscr V}}
\newcommand \bcU {{\mathscr U}}
\newcommand \bcW {{\mathscr W}}
\newcommand \bcX {{\mathscr X}}
\newcommand \bcY {{\mathscr Y}}
\newcommand \bcZ {{\mathscr Z}}
\newcommand \goa {{\mathfrak a}}
\newcommand \gob {{\mathfrak b}}
\newcommand \goc {{\mathfrak c}}
\newcommand \gom {{\mathfrak m}}
\newcommand \gon {{\mathfrak n}}
\newcommand \gop {{\mathfrak p}}
\newcommand \goq {{\mathfrak q}}
\newcommand \goQ {{\mathfrak Q}}
\newcommand \goP {{\mathfrak P}}
\newcommand \goM {{\mathfrak M}}
\newcommand \goN {{\mathfrak N}}
\newcommand \uno {{\mathbbm 1}}
\newcommand \Le {{\mathbbm L}}
\newcommand \Spec {{\rm {Spec}}}
\newcommand \Gr {{\rm {Gr}}}
\newcommand \Pic {{\rm {Pic}}}
\newcommand \Jac {{{J}}}
\newcommand \Alb {{\rm {Alb}}}
\newcommand \alb {{\rm {alb}}}
\newcommand \Corr {{Corr}}
\newcommand \Chow {{\mathscr C}}
\newcommand \Sym {{\rm {Sym}}}
\newcommand \Prym {{\rm {Prym}}}
\newcommand \cha {{\rm {char}}}
\newcommand \eff {{\rm {eff}}}
\newcommand \tr {{\rm {tr}}}
\newcommand \Tr {{\rm {Tr}}}
\newcommand \pr {{\rm {pr}}}
\newcommand \ev {{\it {ev}}}
\newcommand \cl {{\rm {cl}}}
\newcommand \interior {{\rm {Int}}}
\newcommand \sep {{\rm {sep}}}
\newcommand \td {{\rm {tdeg}}}
\newcommand \alg {{\rm {alg}}}
\newcommand \im {{\rm im}}
\newcommand \gr {{\rm {gr}}}
\newcommand \op {{\rm op}}
\newcommand \Hom {{\rm Hom}}
\newcommand \Hilb {{\rm Hilb}}
\newcommand \Sch {{\mathscr S\! }{\it ch}}
\newcommand \cHilb {{\mathscr H\! }{\it ilb}}
\newcommand \cHom {{\mathscr H\! }{\it om}}
\newcommand \colim {{{\rm colim}\, }}
\newcommand \End {{\rm {End}}}
\newcommand \coker {{\rm {coker}}}
\newcommand \id {{\rm {id}}}
\newcommand \van {{\rm {van}}}
\newcommand \spc {{\rm {sp}}}
\newcommand \Ob {{\rm Ob}}
\newcommand \Aut {{\rm Aut}}
\newcommand \cor {{\rm {cor}}}
\newcommand \Cor {{\it {Corr}}}
\newcommand \res {{\rm {res}}}
\newcommand \red {{\rm{red}}}
\newcommand \Gal {{\rm {Gal}}}
\newcommand \PGL {{\rm {PGL}}}
\newcommand \Bl {{\rm {Bl}}}
\newcommand \Sing {{\rm {Sing}}}
\newcommand \spn {{\rm {span}}}
\newcommand \Nm {{\rm {Nm}}}
\newcommand \inv {{\rm {inv}}}
\newcommand \codim {{\rm {codim}}}
\newcommand \Div{{\rm{Div}}}
\newcommand \CH{{\rm{CH}}}
\newcommand \sg {{\Sigma }}
\newcommand \DM {{\sf DM}}
\newcommand \Gm {{{\mathbb G}_{\rm m}}}
\newcommand \tame {\rm {tame }}
\newcommand \znak {{\natural }}
\newcommand \lra {\longrightarrow}
\newcommand \hra {\hookrightarrow}
\newcommand \rra {\rightrightarrows}
\newcommand \ord {{\rm {ord}}}
\newcommand \Rat {{\mathscr Rat}}
\newcommand \rd {{\rm {red}}}
\newcommand \bSpec {{\bf {Spec}}}
\newcommand \Proj {{\rm {Proj}}}
\newcommand \pdiv {{\rm {div}}}
\newcommand \wt {\widetilde }
\newcommand \ac {\acute }
\newcommand \ch {\check }
\newcommand \ol {\overline }
\newcommand \Th {\Theta}
\newcommand \cAb {{\mathscr A\! }{\it b}}

\newenvironment{pf}{\par\noindent{\em Proof}.}{\hfill\framebox(6,6)
\par\medskip}

\newtheorem{theorem}[subsection]{Theorem}
\newtheorem{proposition}[subsection]{Proposition}
\newtheorem{lemma}[subsection]{Lemma}
\newtheorem{corollary}[subsection]{Corollary}

\theoremstyle{definition}
\newtheorem{definition}[subsection]{Definition}

\title{A note on Lefschetz fibrations on algebraic surfaces}

\author{Kalyan Banerjee}


\email{kalyan@math.tifr.res.in, kalyan.b@srmap.edu.in}




\begin{abstract}
Let $S$ be a smooth projective complex algebraic surface and
$f\, :\, S\, \longrightarrow\, {\mathbb C}{\mathbb P}^2$ a finite map. Consider a
pencil of hyperplane sections on ${\mathbb C}{\mathbb P}^2$ and pull it back to $S$. We address the
question whether such a pencil is a Lefschetz pencil on $S$.
\end{abstract}

\maketitle

\section{Introduction}

The theory of algebraic cycles has a fundamental connection with the theory of Lefschetz pencils in the following
sense: Let $S$ be a smooth, projective surface embedded in $\CC\PR^N$. Then consider a Lefschetz pencil on $S$, that
is a fibration of $S$ by hyperplane sections in $\CC\PR^N$, parametrised by a projective line in ${\CC\PR^N}^*$ such
that any singular fiber can have one ordinary double point. Let $0_1,\cdots,0_m$ be the points of
this projective line $\CC\PR^1$ such that the corresponding fibers $S_{0_1},\cdots,S_{0_m}$ are singular.
Take any $t\, \in\, \CC\PR^1$ such that
the corresponding fiber $S_t\, \stackrel{j_t}{\hookrightarrow}\, S$ is smooth. Then by Ehresmann's fibration theorem
$\pi_1(\CC\PR^1\setminus 0_1,\cdots,0_m,t)$ acts on the cohomology $H^1(S_t,\, \QQ)$. This action is given by Picard
Lefschetz formula. Moreover, as a consequence of the Picard Lefschetz formula, the Gysin kernel
$$\ker\{j_{t*}:H^1(S_t,\QQ)\to H^3(S,\QQ)\}$$ is an irreducible submodule of the $\pi_1(\CC\PR^1\setminus
0_1,\cdots,0_m,t)$--module $H^1(S_t,\, \QQ)$.
Now let $A_0(S_t)$ denote the group of
zero cycles on $S_t$, which are algebraically trivial modulo rational equivalence.
The closed embedding $j_t:S_t \,\longrightarrow\, S$ gives rise to a push-forward homomorphism
$j_{t*}$ from $A_0(S_t)$ to $A_0(S)$. A natural question is what is the kernel of this homomorphism $j_{t*}$. The
kernel of $j_{t*}$ is closely related to the monodromy described above. By Mumford-Roitman techniques
\cite{R}, \cite{R1}, \cite{R2}, it can be proved that this kernel is a countable union of translates of an abelian
subvariety of $J(S_t)$, in terms of the identification of $J(S_t)$ with $A_0(S_t)$. Suppose that
$H^3(S,\,\QQ)$ is zero. In this case the Gysin map has $H^1(S_t,\,\QQ)$ as its kernel. The abelian variety mentioned
above gives rise to a Hodge substructure in $H^1(S_t,\,\QQ)$ which is $\pi_1(\CC\PR^1\setminus 0_1,\cdots,0_m,t)$
equivariant. Now by irreducibility of the monodromy action we have this submodule either equal to zero or all of
$H^1(S_t,\,\QQ)$. Consequently the abelian variety is either zero or all of $J(S_t)$. So the kernel of $j_{t*}$ is
either countable or the map is zero. This has been extensively discussed in \cite{BG}.

Our main aim here is to find out when a generic Lefschetz pencil on a smooth projective surface pulls
back to a Lefschetz pencil under a finite map, by using cycle theoretic arguments. More, precisely,
let $f$ be a finite map from a smooth projective surface $S$ to $\CC\PR^2$. Let $\CC\PR^2$ be embedded in
some $\CC\PR^N$. Consider a Lefschetz pencil on $\CC\PR^2$, under this embedding. Pull back the pencil to $S$ by $f$. We
call the pulled back pencil on $S$ to be Lefschetz if the singular fibers have at most ordinary double points,
and ask when the pull-back of a generic Lefschetz pencil a Lefschetz pencil?

We show that for a K3 surface $S$ this is not true. Its proof uses the fact
that a K3 surface has geometric genus greater than zero and hence the albanese map is not an isomorphism. This
leads to following: Let $C_t$ be a hyperplane section of $\CC\PR^2$ and let $C_t'$ be the pull-back of $C_t$ to $S$
by $f$. Consider the closed embedding $j_t:C_t'\to S$. Then it is a consequence of the monodromy of Lefschetz
pencils that kernel of $j_{t*}$ at the level of zero cycles modulo rational equivalence is countable. But this
kernel contains the image of the Jacobian of $C_t$ under $f^*$. This leads to $C_t$ being rational ---
a rather stringent condition. For example we can start with curves in $\CC\PR^N$ of high genus.

Also we discuss the behavior of monodromy under a finite base change for surfaces of general type and threefolds.

The paper is organized as follows. In the next section we recall the basics of monodromy of Lefschetz pencils, and
Picard-Lefschetz formula as present in \cite{Vo}. In the third section we apply these to prove the main
result:

\textit{Let $S$ be a smooth projective surface with geometric genus greater than zero and irregularity equal to
zero. Consider a finite map $f\,:\,S\,\longrightarrow\, \CC\PR^2$. Then a generic pencil on $S$ is not a Lefschetz pencil.}

In  the last two sections we discuss the monodromy of Lefschetz pencils pulled-back under a finite map on threefolds and surfaces of general type with irregularity zero.

{\small \textbf{Acknowledgements:} The author would like to thank the hospitality of Tata Institute Mumbai, for hosting this project. He is indebted to Indranil Biswas for several important discussions about the results present in the manuscript. The author is also grateful to Mahan Mj for helpful discussions relevant to the theme of the paper. Finally the author thanks Vladimir Guletskii for telling the problem to the author and for preliminary discussions which originates the paper. }

\section{The case of a finite map from a smooth projective surface onto $\CC\PR^2$ and Lefschetz fibrations}

Let $S$ be a smooth projective surface over the field of complex numbers. Let $f$ be the finite map from $S$ onto $\CC\PR^2$.
We embed $\CC\PR^2$ into some $\CC\PR^N$ with $N$ big enough by the Veronese embedding. Take a line $\CC\PR^1$ in the dual projective
space ${\CC\PR^N}^*$, and consider the rational map $g\,:\,\CC\PR^2\,\dashrightarrow\, \CC\PR^1$ induced by it.

\begin{definition}
The pencil $g\circ f:S\dashrightarrow \CC\PR^1$ is said to be a {\it Lefschetz pencil} if the only singularities
that occur on the fibers of $g\circ f$ are ordinary double points.
\end{definition}

A natural question is when a generic pencil $g\circ f\,:\, S\,\dashrightarrow\, \CC\PR^1$ a Lefschetz pencil.
For this we need to understand the behavior of monodromy for a pencil
associated to a map $S\,\longrightarrow\,\CC\PR^N$, which is not
an embedding, but finite onto its image with the image being rational.

Let $X$ be a complex manifold of dimension $n$. Let $f\,:\,X\,\longrightarrow\, \Delta$ be a Lefschetz degeneration
over the disk $\Delta$, that is $f$ is proper, and it is smooth
over $\Delta\setminus \{0\}=\Delta^*$ with $X_0\,=\, f^{-1}(0)$ having at most one ordinary double point. Fix $t \, \in\,
\Delta^*$, and let $T\, \in\, \pi_1(\Delta^*, t)\,=\,\ZZ$ be the generator of the fundamental group. Let
$$
\rho\, :\, \pi_1(\Delta^*,\,t)\,\longrightarrow\, \prod_{i=1}^{2n-2} \text{Aut}(H^i(X_t,\,\ZZ))
$$
be the monodromy representation. We recall that
$X_t$ contains a vanishing sphere $S_t^{n-1}$ which is the intersection of a ball centered at the singular point on $X_0$
and $X_t$. The cohomology class of such a sphere is denoted by $\delta$ and it is a generator of
$$\ker(H^{n-1}(X_t,\ZZ)\,\longrightarrow\, H^{n+1}(X,\ZZ))\;.$$
Also note that $X_t$ is real oriented $2n-2$ manifold with an intersection form $\langle-,-\rangle$ on $H^{n-1}(X_t,\,\ZZ)$.

The following theorem is proved in \cite{Vo}:

\begin{theorem}[{\cite[Ch.~3, Theorem 3.16]{Vo}}]\label{theorem1}
For every $\alpha$ in $H^{n-1}(X_t,\,\ZZ)$ the action of $T$ on $\alpha$ is given by
$$T(\alpha)\,=\,\alpha\pm\langle \alpha, \delta\rangle \delta\;.$$
\end{theorem}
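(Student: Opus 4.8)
The plan is to establish the Picard--Lefschetz formula by reducing to the standard quadratic degeneration near the node and then localizing the monodromy at the vanishing cycle. First I would apply the holomorphic Morse lemma: since $X_0$ has a single ordinary double point $p$, there exist local holomorphic coordinates $(z_1,\ldots,z_n)$ centered at $p$ in which $f$ takes the model form $z_1^2+\cdots+z_n^2$. This identifies a neighborhood of $p$ in the degeneration with the standard quadratic family, whose nearby fiber $\{\sum z_i^2=t,\ |z|\le\epsilon\}$ is the Milnor fiber $F$. After rotating $t$ to be positive real, $F$ deformation retracts onto the real vanishing sphere $\{x\in\RR^n:\sum x_i^2=|t|\}\cong S^{n-1}$, whose fundamental class is $\delta$; in particular $\widetilde{H}^*(F,\ZZ)$ is free of rank one, concentrated in degree $n-1$ and generated by $\delta$.

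Second, I would localize the monodromy. Because $f$ is a proper submersion over $\Delta^*$ away from $p$, one can choose the geometric monodromy diffeomorphism $\phi\colon X_t\to X_t$ to be the identity outside the Milnor fiber $X_t\cap B$, where $B$ is a small ball around $p$. Then for any class $\alpha$ the difference $\phi^*\alpha-\alpha$ restricts to zero on $X_t\setminus B$, hence lies in the image of the natural map $H^{n-1}(X_t,X_t\setminus B,\ZZ)\to H^{n-1}(X_t,\ZZ)$. By excision $H^{n-1}(X_t,X_t\setminus B,\ZZ)$ is the local cohomology of the Milnor fiber, which by Poincar\'e--Lefschetz duality is $\cong H_{n-1}(F,\ZZ)\cong\ZZ$ and maps onto $\ZZ\cdot\delta$. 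Therefore $T(\alpha)=\alpha+c(\alpha)\,\delta$ for some linear functional $c\colon H^{n-1}(X_t,\ZZ)\to\ZZ$.

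Third, I would identify the coefficient. By Poincar\'e--Lefschetz duality on $F$, the functional measuring the relative class of $\alpha$ in $H^{n-1}(X_t,X_t\setminus B,\ZZ)$ is the intersection pairing with the vanishing cycle, so $c(\alpha)=\pm\langle\alpha,\delta\rangle$. Pinning down the overall sign is where the genuine work lies: it requires the explicit computation of the geometric monodromy of the model family $\sum z_i^2=t$ as $t$ traverses the loop $\epsilon e^{i\theta}$, $\theta\colon 0\to 2\pi$, which one recognizes as a generalized twist along the vanishing sphere (a Dehn twist when $n=2$) and whose action on $H^{n-1}(F,\ZZ)$ is read off directly. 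The sign is then fixed by the consistency checks $T(\delta)=(-1)^n\delta$ together with the self-intersection $\langle\delta,\delta\rangle=0$ for $n$ even and $\langle\delta,\delta\rangle=\pm 2$ for $n$ odd, both of which follow from the topology of $S^{n-1}$ and the $(-1)^{n-1}$-symmetry of the middle intersection form.

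The step I expect to be the main obstacle is this last explicit local computation of the monodromy diffeomorphism of the standard quadratic model and its effect on middle-dimensional homology; it is the technical heart of Picard--Lefschetz theory. By contrast, the holomorphic Morse lemma and the localization argument are essentially formal, resting only on properness and on the triviality of the smooth fibration over $\Delta^*$ once a neighborhood of the node is removed.
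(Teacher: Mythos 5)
The paper does not actually prove this statement: it is imported verbatim from Voisin's book (the citation [Vo, Ch.~3, Theorem~3.16]), so there is no internal argument to compare yours against. Measured against the proof in the cited source, your plan follows exactly the same route --- holomorphic Morse lemma to reduce to the model $z_1^2+\cdots+z_n^2$, retraction of the Milnor fiber $F$ onto the vanishing sphere, localization of the geometric monodromy in a ball around the node so that $T(\alpha)-\alpha$ lifts to the relative group $H^{n-1}(F,\partial F)\cong\ZZ$, and duality to recognize the coefficient. The architecture is correct and is the standard one.

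As a proof, however, there is a genuine hole, which you yourself flag: the steps you actually carry out yield only $T(\alpha)=\alpha+c(\alpha)\,\delta$ with $c$ linear, and the localization/duality formalism gives at best $c(\alpha)=\lambda\langle\alpha,\delta\rangle$ for some universal integer $\lambda$ attached to the local model. Nothing you establish excludes $\lambda=0$ (trivial monodromy) or $|\lambda|\geq 2$; the assertion $\lambda=\pm 1$ \emph{is} the Picard--Lefschetz theorem, and it comes only from the explicit computation of the geometric monodromy of the quadratic fibration on $H^{n-1}$ of its Milnor fiber (equivalently, of the variation map), which you defer. There is also a secondary soft spot in your localization step: to conclude that $c(\alpha)$ depends only on the restriction $\alpha|_F$, you need the canonical relative lift of $\phi^*\alpha-\alpha$ to be controlled by that restriction, and since $H^{n-2}(\partial F)$ need not vanish, the exact sequence of the pair $(F,\partial F)$ permits a priori correction terms; this is precisely what the variation-map formalism, which again rests on the model computation, is designed to handle. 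Finally, your consistency checks are circular as stated: $T(\delta)=(-1)^n\delta$ is itself an output of the model computation, and for $n$ even the checks are equally consistent with $\lambda=0$. So: right strategy, the same as the cited source, but the decisive step is named rather than performed.
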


As before, take
$$
f\, :\, S \,\longrightarrow\,  \CC\PR^2 \, \hookrightarrow\, \CC\PR^N\, ,
$$
where the map $f$ is finite, and the embedding of $\CC\PR^2$ is given by a Veronise map. Let
$\bcD_S$ denote the discriminant variety of $S$ in ${\CC\PR^N }^{*}$ consisting of hyperplanes whose pullback to
$S$ is singular. Let
\begin{equation}\label{deu}
U\,:=\,{\CC\PR^N}^{*}\setminus \bcD_S\, ,
\end{equation}
so $U$ parametrizes the hyperplanes whose pull-back
under $f$ is smooth. So we have the family:
$$\bcS_U\,:=\,\{(x,\,H)\,\in\, S\times U\, \mid\, f(x)\,\in \, \CC\PR^2\cap H\}\, ;$$
the natural projection $\pr_2\, :\, \bcS_U\,\longrightarrow\, U$ is a submersion with fibers equal to
$S\cap f^{-1}(\CC\PR^2\cap H)$.

The discriminant variety $\bcD_S$ is a Zariski closed subset of ${\CC\PR^N}^*$. Now observe that $\bcD_S$ is contained in
 the discriminant variety of $\CC\PR^2$ in $\CC\PR^N$, denoted by $\bcD$.

\begin{lemma}\label{theorem3}
Let ${\mathcal P}\, \subset\, {\CC\PR^N}^*$ be a projective line passes through a fixed point $0$ of $U$ in \eqref{deu}
and intersects $\bcD_S$ transversally in its smooth locus. Then the natural homomorphism
$$\pi_1({\mathcal P}\setminus ({\mathcal P}\cap \bcD_S),\,0)\,\longrightarrow\, \pi_1(U,\,0)$$
is surjective.
\end{lemma}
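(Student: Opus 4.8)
The plan is to read the statement as an instance of the Zariski--Lefschetz theorem for the fundamental group of the complement of a hypersurface. First I would check that $\bcD_S$ is genuinely a hypersurface in $(\CC\PR^N)^*$: the hypothesis that $\mathcal{P}$ meets $\bcD_S$ transversally, in finitely many points of its smooth locus, is only consistent with $\bcD_S$ having pure codimension one. (If instead $\bcD_S$ had codimension $\geq 2$, then removing it from $(\CC\PR^N)^*$ would not change the fundamental group, so $\pi_1(U,0)=\pi_1(\CC\PR^N)=1$ and the assertion would be immediate.) Thus I may assume $U=(\CC\PR^N)^*\setminus\bcD_S$ is the complement of a hypersurface in projective space, hence a smooth affine variety of dimension $N$, containing both the basepoint $0$ and the affine curve $U\cap\mathcal{P}=\mathcal{P}\setminus(\mathcal{P}\cap\bcD_S)$.

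Next I would invoke the Lefschetz theorem of Zariski type (Zariski, with the rigorous proof of Hamm--L\^e): for the complement $U$ of a hypersurface in $\CC\PR^N$ and a sufficiently general line $\mathcal{P}$, the inclusion $U\cap\mathcal{P}\hookrightarrow U$ induces a surjection on $\pi_1$; equivalently, up to homotopy $U$ is obtained from $U\cap\mathcal{P}$ by attaching cells of real dimension $\geq 2$, so $\pi_1(U,0)$ is a quotient of $\pi_1(U\cap\mathcal{P},0)$. To keep this self-contained I would argue by descending induction on $N$, slicing $U$ by a flag of general projective hyperplanes through $0$, each intermediate section being again a hypersurface complement in a smaller projective space: each step $\pi_1(U\cap H)\to\pi_1(U)$ is surjective by the weak Lefschetz property for fundamental groups of hypersurface complements, valid as long as the ambient slice has dimension $\geq 2$, and the base case $N=2$ is the classical fact that $\pi_1(\CC\PR^2\setminus C)$ is generated by the meridians carried by a general line. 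Composing the $N-1$ surjections down to $\mathcal{P}=H_1\cap\cdots\cap H_{N-1}$ yields the claim.

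The step I expect to be the main obstacle is justifying that the \emph{particular} line $\mathcal{P}$ in the statement is admissible, rather than merely a member of some unspecified dense family, and that the basepoint may be fixed. The right framework is the stratified form of the theorem (Goresky--MacPherson): if $\mathcal{P}$ is transverse to a Whitney stratification of $(\CC\PR^N)^*$ subordinate to $\bcD_S$, with strata $U$, the smooth locus $\bcD_S^{\rm sm}$, and $\Sing(\bcD_S)$, then the conclusion holds for that $\mathcal{P}$; and the hypotheses supply exactly this, since meeting $\bcD_S$ only along $\bcD_S^{\rm sm}$ forces $\mathcal{P}\cap\Sing(\bcD_S)=\emptyset$ and transversality there is transversality to the open stratum of $\bcD_S$. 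Finally, because $0\in U$ we have $0\notin\bcD_S$, so inside the $\CC\PR^{N-1}$ of lines through $0$ those that are tangent to $\bcD_S$ or meet $\Sing(\bcD_S)$ form a proper closed subset; admissible lines through $0$ therefore exist and are dense, which makes the constraint of passing through the fixed $0$ compatible with the genericity used above and completes the surjectivity of $\pi_1(\mathcal{P}\setminus(\mathcal{P}\cap\bcD_S),0)\to\pi_1(U,0)$.
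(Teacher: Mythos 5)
Your proposal is correct, but it takes a genuinely different route from the paper. You reduce the lemma to the stratified Lefschetz--Zariski theorem (Hamm--L\^e, Goresky--MacPherson): the hypothesis that $\mathcal P$ meets $\bcD_S$ only in its smooth locus, and transversally there, is exactly transversality to a Whitney stratification whose open stratum is the smooth locus of $\bcD_S$, and the theorem for a transverse line in the complement of a hypersurface in $\CC\PR^N$ gives $\pi_i(U,\,U\cap \mathcal P)=0$ for $i\le 1$, i.e.\ the desired surjectivity, with no genericity assumption beyond the stated transversality. The paper instead gives a self-contained elementary argument: it forms the family $W$ of \emph{all} lines through $0$ meeting $\bcD_S$ transversally in its smooth locus, the incidence variety $P\subset \CC\PR^N\times W$ with its $\CC\PR^1$-bundle projection $\pr_2$, and the open set $U'=\pr_1^{-1}(U)$, on which $\pr_2$ restricts to a fibration with fiber $\mathcal P\setminus(\mathcal P\cap\bcD_S)$ and with the tautological section $t\mapsto (0,t)$; the homotopy exact sequence of this fibration, together with the fact that $\pr_1$ contracts the section to the basepoint $0$, shows that $\pi_1$ of the fiber surjects onto the image of $\pi_1(U')$, and a birationality-plus-real-codimension-$\ge 2$ argument shows that $\pi_1(U')\to\pi_1(U)$ is itself onto. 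What each approach buys: yours is short and handles the fixed, possibly non-generic line in one stroke, but it outsources all the real work to stratified Morse theory; the paper's uses nothing beyond the exact sequence of a fibration and general position, at the cost of the bookkeeping with $U'$, $U''$, $V''$ --- and it is the classical argument (essentially the one in Voisin's book). Two small glosses in your write-up: the hypothesis does not quite force $\bcD_S$ to be of pure codimension one, since $\bcD_S$ could have extra components of codimension $\ge 2$ that $\mathcal P$ simply misses, but that mixed case reduces to the hypersurface case by the same codimension-$\ge 2$ observation you already make; and the three-piece partition into $U$, the smooth locus of $\bcD_S$, and $\Sing(\bcD_S)$ need not itself satisfy the Whitney conditions, so one should refine the stratification inside $\Sing(\bcD_S)$ --- harmless here, because $\mathcal P$ avoids $\Sing(\bcD_S)$ entirely, making transversality to those deeper strata vacuous.
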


\begin{proof}
First identify $\CC\PR^N$ with its dual. Let $W$ be the set of lines in ${\CC\PR^N}$ which passes through $0$ and meets
$\bcD_S$ transversally in its smooth
locus. For
$$P\, :=\, \{(x,\,t)\,\in\, \CC\PR^N\times W\, \mid\, x\,\in\, {\mathcal P}_t\}\, ,$$
the natural projection $\pr_2\,:\,P\,\longrightarrow\, W$ is a $\CC\PR^1$ bundle. Consider the first projection
$\pr_1\,:\,P\,\longrightarrow\, \CC\PR^N$, and set $U'\,=\,\pr_1^{-1}(U)\,=\, P\setminus \pr_1^{-1}(\bcD_S)\;. $
By the definition of $W$, the restriction of $\pr_2$ to $U'$ is a fibration. Also since all lines in $W$ pass
through $0$, the restriction $\pi\,=\,{\pr_2}|_{U'}$ admits a section given by $t\,\longmapsto\, (0,\,t)$. The image
of this section will be denoted by $W'$. Let $0'\, =\, (0,\, o)$ be a point over $0$. Note that the fibers of $\pi$ are identified with $\mathcal P\setminus \mathcal P\cap \bcD_S$, we have a homotopy exact sequence:
$$\pi_1(\mathcal P\setminus \mathcal P\cap\bcD_S,0)\, \longrightarrow\, \pi_1(U',0')\, \longrightarrow\, \pi_1(W,o)
\, \longrightarrow\, 1\;.$$
We can apply $pr_{1*}$ to this exact sequence. $W'\subset U'$ gives a section of $\pi$, so we have
$$\pi_1(W',0')\subset \pi_1(U',0')\;.$$
Now $\pr_{1}$ contracts $W$ to $0$ and we have
$$\pr_{1*}(\pi_1(\mathcal P\setminus \mathcal P\cap \bcD_S,0))$$
surjects onto $\pr_{1*}(\pi_1(U',0'))$. So it remains to show that $\pr_{1*}$ from $\pi_1(U',0')$ surjects onto $\pi_1(U,0)$. Observe that $\pr_1$ is birational. Consider the rational map from $U$ to $U'$ constructed in the following way. Let $x$ be a point in $U$ different from $0$, then there exists a unique line joining $x$ and $0$;
call this line $\mathcal P_t$. Let $t$ be the point in $W$ corresponding to $\mathcal P_t$. Send $x$ to $(x,t)$. This map is well-defined on the complement of Zariski closed subset of $U$, that is those points such that the line joining $x,0$ does not meet $\bcD_S$ transversally.  Let $U''$ be the complement of this set in $U'$. Then the real codimension of $U'\setminus U''$ is at least $2$. Let $x$ be a point in $U$ other than $0$ and $x'$ its pre-image in $U'$. By the above fact, it follows that $\pi_1(U'',x')$ surjects onto $\pi_1(U',x')$. Let $V''$ be the Zariski open set of $U$ isomorphic to $U''$. Since the real codimension of $U\setminus V''$ is at least $2$, we have $\pi_1(V'',x)$ surjects onto $\pi_1(U,x)$  Therefore we have a commutative diagram as follows:

$$
  \diagram
 \pi_1(U'',x')\ar[dd]_-{} \ar[rr]^-{} & & \pi_1(U',x') \ar[dd]^-{} \\ \\
  \pi_1(V'',x) \ar[rr]^-{} & & \pi_1(U,x)
  \enddiagram
  $$
Here the left vertical map is an isomorphic and the bottom horizontal map is surjective. Therefore it follows that
$\pi_1(U',x')$ surjects onto $\pi_1(U,x)$. Since $U$ is path connected it follows that $\pi_1(U',0')$ surjects
onto $\pi_1(U,0)$.
\end{proof}

 Let $\bcD_S^0$ be the Zariski open subset of $\bcD_S$ parametrising the hyperplanes in $\CC\PR^N$ whose pull-back
to $S$ has one ordinary double point. Consider the fibration $\pr_2\,:\,\bcS_U\, \longrightarrow\, U$; for any
$z\, \in\, U$, the fiber ${\rm pr}^{-1}_2(z)$ will be denoted by $S_z$.
Fix $0\,\in\, U$; we have the monodromy representation
$$\rho\,:\,\pi_1(U,\,0)\, \longrightarrow\, Aut(H^{1}(S_0,\,\ZZ))\, .$$
For any $y$ in $\bcD_S^0$, let $y'$ in $U$ be near $y$, in the sense that it is contained in a disc $D_y$, which meets $\bcD_S^0$ transversally at $y$ and $D_y\setminus \{y\}$ is in $U$. This can be done by choosing a projective line through $y$ and $y'\in U$, such that the line intersect $\bcD_S^0$ transversally. Then we have a vanishing cycle
$\delta_y$ well-defined up to a sign as a generator of the kernel of the homomorphism
$$H_{n-1}(S_{y'},\,\ZZ)\, \longrightarrow\, H_{n-1}(S_{D_{y}},\,\ZZ)\, ,$$
where $S_{D_y}\, :=\, \pr_2^{-1}(D_y)$, \cite[Lemma 2.26]{Vo}. Let $\gamma$ be a path from $y'$ to $0$ in $U$. Then
trivializing the fibration over the image of $\gamma$, we can construct a diffeomorphism, \cite[subsection 3.1.2]{Vo},
$$\phi\,:\,S_{y'}\,\stackrel{\sim}{\longrightarrow}\, S_0$$
which is unique up to homotopy. Consider $\phi_*(\delta_y)$ in $H_{1}(S_0,\,\ZZ)\,=\,H^{1}(S_0,\,\ZZ)$. This process could be done for any arbitrary point of $\bcD_{S}^0$. Therefore given any $y$ in $\bcD_{S}^0$, we have a vanishing cycle $\phi_*(\delta_y)$ in $H^1(S_0,\ZZ)$, where $\phi$ is the diffeomorphism between $S_{y'}$ and $S_0$ as described above. Notice that this vanishing cycle is dependent on the choice of $y'$ and the path between $y'$ and $0$. But the homology class $\phi_*(\delta_y)$ defined by the choice of an orientation of $S_0$ is well defined upto sign \cite{Vo}[chapter 2, section 2.2.1].

\begin{theorem}
\label{theorem4}
All the vanishing cycles constructed above are conjugate under the monodromy action.
\end{theorem}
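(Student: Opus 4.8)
The plan is to reduce the statement to the following local-to-global principle: if $y_1$ and $y_2$ lie in the same path component of $\bcD_S^0$, then the transported vanishing cycles $\phi_{1*}(\delta_{y_1})$ and $\phi_{2*}(\delta_{y_2})$ differ by an element of the monodromy group $\rho(\pi_1(U,0))$, up to the unavoidable sign. Granting this, the theorem follows at once from the connectedness of $\bcD_S^0$: since any $y\in\bcD_S^0$ connects to a fixed base critical point, every constructed vanishing cycle lies in a single monodromy orbit.

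To prove the local-to-global principle, fix a path $\sigma\,:\,[0,1]\,\longrightarrow\,\bcD_S^0$ with $\sigma(0)=y_1$ and $\sigma(1)=y_2$. For each $s$ choose, continuously in $s$, a nearby point $y'(s)\in U$ together with a small disc $D_{\sigma(s)}$ transverse to $\bcD_S^0$ at $\sigma(s)$ as in the construction preceding the theorem; the points $y'(s)$ trace out a path $\wt\sigma$ in $U$ from $y_1'$ to $y_2'$. Since the vanishing cycle $\delta_{\sigma(s)}\in H_{n-1}(S_{y'(s)},\ZZ)$ is the generator of the kernel of the specialization map and varies continuously with $s$, it defines a flat section of the vanishing-homology local system along $\sigma$; trivializing the fibration $\pr_2$ over $\wt\sigma$ then produces a diffeomorphism $\psi\,:\,S_{y_1'}\,\stackrel{\sim}{\longrightarrow}\,S_{y_2'}$, unique up to homotopy, with $\psi_*(\delta_{y_1})=\pm\delta_{y_2}$.

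Now let $\gamma_1,\gamma_2$ be the chosen paths from $y_1',y_2'$ to $0$ and $\phi_1,\phi_2$ the associated diffeomorphisms onto $S_0$. The concatenation $g:=[\gamma_1^{-1}\cdot\wt\sigma\cdot\gamma_2]$ is a loop in $U$ based at $0$, hence an element of $\pi_1(U,0)$, and trivializing over it shows that its monodromy acts on $H^{n-1}(S_0,\ZZ)=H_{n-1}(S_0,\ZZ)$ as $\rho(g)=\phi_{2*}\circ\psi_*\circ\phi_{1*}^{-1}$. Applying this to $\phi_{1*}(\delta_{y_1})$ gives
$$\rho(g)\bigl(\phi_{1*}(\delta_{y_1})\bigr)=\phi_{2*}\bigl(\psi_*(\delta_{y_1})\bigr)=\pm\,\phi_{2*}(\delta_{y_2})\,,$$
which is the desired conjugacy, the sign being harmless since each vanishing cycle is defined only up to sign.

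The main obstacle is the input that $\bcD_S^0$ is connected, equivalently that the discriminant $\bcD_S$ is irreducible: only then does every $y\in\bcD_S^0$ join a fixed base critical point by a path, feeding the transport argument above. I would establish this by exhibiting $\bcD_S$ as the image, under the proper projection to ${\CC\PR^N}^*$, of the incidence variety $I=\{(x,H)\,\mid\, f^*H \text{ is singular at } x\}$. For fixed $x$, the conditions that $f^*H$ pass through $x$ and have vanishing differential there are linear in $H$, so the fibre of $I$ over $x\in S$ is a projective linear subspace, in particular irreducible; since $S$ is irreducible this forces $I$, and hence its image $\bcD_S$, to be irreducible. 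Its ordinary-double-point locus $\bcD_S^0$ is a dense Zariski-open subset of $\bcD_S$, so its complement has real codimension at least two and $\bcD_S^0$ is therefore path-connected in the classical topology, completing the argument.
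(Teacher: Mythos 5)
Your transport mechanism is sound and is essentially the paper's own: push a path between two critical values off to a nearby path in $U$, trivialize the fibration over it to carry one vanishing cycle to the other, and close up with the chosen paths to the base point to obtain a loop whose monodromy realizes the conjugacy, up to the harmless sign. Where your proposal genuinely breaks down is the connectedness input. You claim $\bcD_S$ is irreducible because the incidence variety $I=\{(x,H)\mid f^*H\ \text{is singular at}\ x\}$ has irreducible (projective linear) fibres over the irreducible surface $S$. That inference is invalid without equidimensionality of the fibres (irreducible base plus irreducible fibres is not enough: $V(xy)\subset\AF^2$ projecting to the $x$-axis has irreducible fibres over an irreducible base and is reducible), and here the fibres do jump: the conditions $h(f(x))=0$, $d_x(h\circ f)=0$ impose three independent linear conditions where $df_x$ has rank $2$, but only two along the ramification divisor of $f$, where $df_x$ has rank $1$. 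Note that $f$ is necessarily ramified: a finite unramified map onto the simply connected $\CC\PR^2$ would be an isomorphism, which is excluded in all the paper's applications.

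Worse, the conclusion itself is false in general. The ramification contributes a component of $I$ of the same dimension as the generic part, and correspondingly $\bcD_S$ has two natural pieces: the dual variety $\bcD$ of $\CC\PR^2\subset\CC\PR^N$ (hyperplanes whose plane section is singular, whose pullbacks are then singular over every preimage of the singular point), and the closure of the locus of hyperplanes whose plane section is smooth but tangent to the branch curve of $f$ (in the local model $f\colon (u,v)\mapsto (u,v^2)$ with branch curve $y=0$, the pullback of the tangent curve $y=x^2$ is the node $v^2=u^2$). Both are hypersurfaces in ${\CC\PR^N}^*$, so $\bcD_S$ is reducible and your route to path-connectedness of $\bcD_S^0$ collapses. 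The paper sidesteps exactly this point: it never tries to connect two points inside $\bcD_S^0$; instead it uses the inclusion of $\bcD_S^0$ into the locus $\bcD_0$ of nodal hyperplane sections of $\CC\PR^2$ itself, which is smooth and connected by the classical theory for an embedded variety \cite[Ch.~2, Lemma 2.7]{Vo}, chooses the path $l$ there, and lifts it to a nearby path $l'$ in $U$ over which it trivializes the fibration. To repair your argument you would either have to prove connectedness of $\bcD_S^0$ directly, which is delicate precisely because of the two kinds of degeneration above, or transport along the discriminant of the image rather than that of $S$, as the paper does.
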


\begin{proof}
By definition of the monodromy action if we change the path $\gamma$ by a loop $\gamma'$ at $0$, then the morphism
$\phi_*$ is nothing but $\rho(\gamma')\circ\phi_*$. Therefore
$$\delta_{\gamma'.\gamma}=\phi_*(\rho(\gamma')\delta_y)=\rho(\gamma')(\delta_{\gamma})\;.$$
So it is sufficient to check  that $\delta_{y}$ is conjugate to $\delta_{z}$ when we change the point $y$ to $z$. Note that $\bcD_S^0$ (the Zariski open subset of $\bcD_S$ parametrising the pull-back of hyperplane sections with one ordinary double point singularity), is a subset of $\bcD_0$, parametrising the hyperplane sections of $\CC\PR^2$, with one ordinary double points. $\bcD_0$  is smooth and hence path-connected, \cite{Vo}[ chapter 2, lemma 2.7]. Let $y,z$ be two points in $\bcD_S^0$. Choose a path $l$ from $y$ to $z$ in $\bcD_0$. Now by path-lifting we can lift $l$ to some path $l'$ from $y'$ to $z'$ in the complement of $\bcD_0$, which is a subset of $U$. Consider a trivialisation of $\pr_2$ over $l'$. That gives us a diffeomorphism of $S_{y}$ and $S_{z}$. Hence $\delta_{y}$ is transported to $\delta_{z}$. Let $\gamma$ be a path from $0$ to $y'$ and $\gamma'$ is path from $0$ to $z'$. Consider the loop
$$\gamma''=\gamma'^{-1}.l'.\gamma$$
based at $0$ and it satisfies
$$\rho(\gamma'')(\delta_{\gamma})=\delta_{\gamma'}\;.$$
\end{proof}
Let $j$ denote the embedding of $S_0$ into $S$, for a fixed point $0$ in $U$.
Now consider the embedding of $\bcS_U$ into $S\times U$, call it $J$. Then this $J$ induces a push-forward homomorphism of local systems
$$J_*:R^{1}\pr_{1*}\QQ\to R^{3}\pr_{1*}\QQ$$
whose value at the stalk over $0$, is nothing but the kernel
$$\ker(j_*:H^1(S_0,\QQ)\to H^3(S,\QQ))\;.$$
Kernel of $J_*$ defines a local system, hence $\pi_1(U,0)$ acts on $\ker(j_*)$.

\begin{theorem}
\label{theorem8}
The action of $\pi_1(U,0)$ on the vanishing cohomology
$$\ker(j_*:H^1(S_0,\QQ)\to H^3(S,\QQ))\;.$$
is irreducible, if there exists a Lefschetz pencil $\mathcal P$ in ${\CC\PR^N}^*$ intersecting $\bcD_S$ transversely.
\end{theorem}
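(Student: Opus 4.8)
The plan is to use Lemma~\ref{theorem3} to transfer the irreducibility question for $\pi_1(U,0)$ to the monodromy of the Lefschetz pencil $\mathcal P$ furnished by the hypothesis, and then to run the classical Picard--Lefschetz argument on the pencil. Write $V\,:=\,\ker(j_*\,:\,H^1(S_0,\QQ)\,\to\, H^3(S,\QQ))$ for the vanishing cohomology and let $\rho\,:\,\pi_1(U,0)\,\to\,\Aut(V)$ be the monodromy representation. By Lemma~\ref{theorem3} the natural homomorphism $\pi_1(\mathcal P\setminus(\mathcal P\cap\bcD_S),0)\,\to\,\pi_1(U,0)$ is surjective; composing $\rho$ with this surjection yields the monodromy representation of the pencil, and because the group map is onto, the image of $\pi_1(\mathcal P\setminus(\mathcal P\cap\bcD_S),0)$ in $\Aut(V)$ coincides with the image of $\pi_1(U,0)$. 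Consequently a subspace of $V$ is $\pi_1(U,0)$-invariant if and only if it is invariant under the pencil monodromy, so it suffices to prove that the pencil monodromy acts irreducibly on $V$.

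For the genuine Lefschetz pencil $\mathcal P$ there are three classical inputs I would invoke. First, $V$ is spanned by the vanishing cycles $\delta_y$ attached to the finitely many critical values of the pencil, so that the monodromy is generated by the Picard--Lefschetz transvections $T_{\delta_y}$ about those values. Second, all these vanishing cycles are conjugate under the monodromy, which is exactly the content of Theorem~\ref{theorem4}. Third, the intersection form $\langle-,-\rangle$ restricts to a non-degenerate form on $V$, as in the classical theory (Poincar\'e duality together with the description of $V$ as the orthogonal complement of the invariant part).

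The core of the argument is then the standard dichotomy. Let $V'\subseteq V$ be a nonzero invariant subspace. If there exist $\alpha\in V'$ and a vanishing cycle $\delta$ with $\langle\alpha,\delta\rangle\neq 0$, then the Picard--Lefschetz formula of Theorem~\ref{theorem1}, namely $T_\delta(\alpha)=\alpha\pm\langle\alpha,\delta\rangle\delta$, shows that $T_\delta(\alpha)-\alpha$ is a nonzero scalar multiple of $\delta$; since $V'$ is invariant, $\delta\in V'$. By the conjugacy of the vanishing cycles (Theorem~\ref{theorem4}) together with the invariance of $V'$, every vanishing cycle then lies in $V'$, and since the vanishing cycles span $V$ we conclude $V'=V$. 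In the remaining case $\langle\alpha,\delta\rangle=0$ for every $\alpha\in V'$ and every vanishing cycle $\delta$; as the vanishing cycles span $V$, this forces $V'$ into the radical of the form on $V$, which is trivial by non-degeneracy, contradicting $V'\neq 0$. Hence $V'=V$ and the action is irreducible.

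The step I expect to be the main obstacle is the verification that $V$ is spanned by vanishing cycles and that $\langle-,-\rangle$ is non-degenerate on $V$ in the present setting, where $f$ is only a finite map onto $\CC\PR^2$ rather than an embedding. Voisin's spanning and duality statements are formulated for honest Lefschetz pencils on an embedded variety, so one must check that the reduction to the pencil $\mathcal P$ --- whose existence is precisely the hypothesis, and along which the fibers acquire at most ordinary double points --- places us verbatim in that classical situation. Once this is secured, the Picard--Lefschetz/conjugacy dichotomy above is purely formal.
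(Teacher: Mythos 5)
Your proposal is correct and follows essentially the same route as the paper's own proof: reduce to the pencil monodromy via Lemma~\ref{theorem3}, invoke Voisin's results that the vanishing cohomology is spanned by vanishing cycles and that the intersection form is non-degenerate on $\ker(j_*)$, and then run the Picard--Lefschetz transvection argument (Theorem~\ref{theorem1}) together with the conjugacy of vanishing cycles (Theorem~\ref{theorem4}) to force any nonzero invariant subspace to be everything. Your closing caveat about transplanting Voisin's embedded-pencil statements to the finite-map setting is a subtlety the paper glosses over, but it does not alter the shared structure of the two arguments.
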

\begin{proof}
By Lemma \ref{theorem3}, we have that the action of $\pi_1(U,0)$ can be computed by considering a Lefschetz pencil $\mathcal P$ in ${\CC\PR^N}^*$, which intersect $\bcD_S$ transversely. By lemma 2.26 in \cite{Vo} the vanishing cohomology is generated by vanishing cycles and by proposition 2.27 \cite{Vo}, the intersection form restricted to $\ker(j_*)$ is non-degenerate. Now $\pi_1(\mathcal P\setminus\{0_1,\cdots,0_m\})$ is generated by $\gamma_1,\cdots,\gamma_m$, such that the action of $\gamma_i$ is given by
$$\rho(\gamma_i)(\alpha)=\alpha\pm \langle \alpha,\delta_{\gamma_i}\rangle \delta_{\gamma_i}\;,$$
here $\delta_{\gamma}$ is the vanishing cycle corresponding to the loop $\gamma_i$.
Here $\gamma_i$ arises from a path from $0$ to $0_i$, $0_i$'s correspond to the singular fibers.
Suppose that $V$ is a $\pi_1(\mathcal P\setminus\{0_1,\cdots,0_m\})$ stable subspace in $\ker(j_*)$. Also suppose that $V$ is non-trivial. Let $\alpha$ be a non-zero element in $V$. Then
$$\rho(\gamma_i)(\alpha)=\alpha\pm \langle \alpha, \delta_{\gamma_i}\rangle\alpha$$
Since $\alpha$ is non-zero and the intersection form restricted to $\ker(j_*)$ is non-degenerate, there exists $\gamma_i$, such that
$\langle \alpha, \delta_{\gamma_i}\rangle$ is nonzero. By the above formula it follows then that $\delta_{\gamma_i}$ is in $V$, for some $i$. Since $V$ is stable under the action of $\pi_1(\mathcal P\setminus\{0_1,\cdots,0_m\})$ and all the vanishing cycles are conjugate, all the vanishing cycles belong to $V$. Hence $V$ is $\ker(j_*)$.
\end{proof}

\section{Application of the above result}
Let $S$ be a surface with geometric genus greater than $0$ and irregularity $0$. Consider as before a finite map $f:S\to \CC\PR^2$. Can we say that a generic pencil in ${\CC\PR^N}^{*}$ pulled back under $f$ is a Lefschetz pencil. For that we consider the group of algebraically  trivial zero cycles on $S$, denote it by $A_0$. Suppose that we have a hyperplane section $C_t$ in $\CC\PR^2$, whose inverse image is smooth, we denote it by $\wt{C_t}$. Then we have the Cartesian square:

$$
  \diagram
 \wt{C_t}\ar[dd]_-{} \ar[rr]^-{} & & C_t \ar[dd]^-{} \\ \\
  S \ar[rr]^-{} & & \CC\PR^2
  \enddiagram
  $$
This gives rise to the following square at the level of zero cycles
$$
  \diagram
 A_0(C_t)\ar[dd]_-{} \ar[rr]^-{} & & A_0(\wt{C_t}) \ar[dd]^-{j_{t*}} \\ \\
  A_0(\CC\PR^2) \ar[rr]^-{} & & A_0(S)
  \enddiagram
  $$
Since $A_0(\CC\PR^2)$ is zero, we have that $A_0(C_t)\cong J(C_t)$ maps into the kernel of $j_{t*}$. Suppose that there exists a Lefschetz pencil $\mathcal P$ through $t$ intersecting $\bcD_S$ transversely. Then we have that $\ker(j_{t*})$ at the level of cohomology is irreducible under the action of $\pi_1(\mathcal P\setminus\{0_1,\cdots,0_m\}$. From this we deduce the following:

\begin{theorem}
\label{theorem5}
The abelian variety $J(\wt{C_t})$ is simple.
\end{theorem}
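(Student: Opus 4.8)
The plan is to deduce the simplicity of $J(\wt{C_t})$ from the irreducibility of the monodromy action proved in Theorem~\ref{theorem8}, through the standard dictionary between abelian subvarieties and rational sub-Hodge structures. Recall that $J(\wt{C_t})$ is simple exactly when the weight-one Hodge structure $H^1(\wt{C_t},\QQ)$ carries no proper nonzero sub-Hodge structure: an abelian subvariety $B\subseteq J(\wt{C_t})$ corresponds to the sub-Hodge structure $H_1(B,\QQ)\subseteq H_1(\wt{C_t},\QQ)=H^1(\wt{C_t},\QQ)$, and this assignment is a bijection onto the set of sub-Hodge structures. So it is enough to exclude proper nonzero sub-Hodge structures of $H^1(\wt{C_t},\QQ)$.

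First I would identify the vanishing cohomology with the whole of $H^1$. Because $S$ has irregularity zero we have $H^1(S,\QQ)=0$, and Poincar\'e duality on the oriented four-manifold $S$ gives $H^3(S,\QQ)\cong H^1(S,\QQ)^{\vee}=0$. Hence the Gysin map is forced to vanish, so, writing $\wt{C_t}=S_0$ for the fibre over $0\in U$,
$$\ker\bigl(j_*\colon H^1(\wt{C_t},\QQ)\longrightarrow H^3(S,\QQ)\bigr)=H^1(\wt{C_t},\QQ).$$
Under the running hypothesis there is a Lefschetz pencil $\mathcal P$ through $t$ meeting $\bcD_S$ transversely, so Theorem~\ref{theorem8} applies and shows that $\pi_1(U,0)$ acts irreducibly on this vanishing cohomology, that is, on all of $H^1(\wt{C_t},\QQ)$.

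The central step is to convert irreducibility of the monodromy representation into the absence of proper sub-Hodge structures; this is where the real work lies, since an arbitrary sub-Hodge structure $H_1(B,\QQ)$ need not be stable under the monodromy group $\Gamma:=\im(\rho)$. I would bridge this as follows. By Theorem~\ref{theorem1} the monodromy of the pencil is generated by the Picard--Lefschetz transvections $\alpha\mapsto\alpha\pm\langle\alpha,\delta_{\gamma_i}\rangle\delta_{\gamma_i}$; since the fibres $\wt{C_t}$ are curves the intersection form is alternating, so each such operator is a symplectic transvection, hence unipotent, and sits on a one-parameter unipotent subgroup. Therefore the Zariski closure $\ol{\Gamma}$ is connected and equals the connected algebraic monodromy group. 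By the theorem of Deligne and Andr\'e this connected group is normal in the derived group of the generic Mumford--Tate group $MT$, and in particular $\ol{\Gamma}\subseteq MT$. Since a $\QQ$-subspace of $H^1(\wt{C_t},\QQ)$ is a sub-Hodge structure precisely when it is $MT$-invariant, and since $\ol{\Gamma}$ has exactly the invariant subspaces of $\Gamma$ and therefore acts irreducibly, the larger group $MT\supseteq\ol{\Gamma}$ acts irreducibly as well. Thus $H^1(\wt{C_t},\QQ)$ has no proper nonzero sub-Hodge structure, and translating back through the dictionary $J(\wt{C_t})$ has no proper nonzero abelian subvariety, i.e. it is simple.

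I expect precisely this last bridge to be the main obstacle. One must take $t$ very general, so that $MT$ is realised at $t$ and the Andr\'e--Deligne containment is available, and one must check that the connectedness of $\ol{\Gamma}$ genuinely reduces the comparison to the connected algebraic monodromy group rather than to $\Gamma$ together with its component group. A more self-contained alternative that avoids Mumford--Tate groups would be to show that any sub-Hodge structure of the very general fibre spreads out to a sub-variation of Hodge structure over $U$, using Deligne's semisimplicity theorem together with the global invariant cycle theorem; such a sub-variation is automatically $\Gamma$-stable, whence it is $0$ or everything by Theorem~\ref{theorem8}, giving the same conclusion.
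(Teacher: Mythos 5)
Your proposal is correct in substance and reaches the same conclusion, but the bridge you build from monodromy irreducibility to Hodge-theoretic simplicity is genuinely different from the paper's. The paper argues by contradiction and stays entirely inside Lefschetz theory: assuming a proper nonzero abelian subvariety $A_t\subset J(\wt{C_t})$, it observes that $A_t$ and $J(\wt{C_t})$ are defined over a finite extension $L$ of $\CC(t)$, spreads them out over a curve $\mathcal P'$ with $\CC(\mathcal P')=L$, and thereby gets an action of the finite-index subgroup $\pi_1(U',0')\subset\pi_1(U,0)$ preserving $H_t=H^{2d-1}(A_t,\QQ)\subset H^1(\wt{C_t},\QQ)$; the key trick is then the Picard--Lefschetz formula $\rho(\gamma^m)(\alpha)=\alpha\pm m\langle\alpha,\delta_\gamma\rangle\delta_\gamma$, which upgrades invariance under the power $\gamma^m$ to invariance under $\gamma$ itself, so that $H_t$ is stable under the full monodromy group and irreducibility (Theorem~\ref{theorem8}) forces $H_t=0$ or everything. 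Your route replaces this elementary finite-index upgrade by heavy machinery: unipotence of the transvections gives connectedness of the algebraic monodromy group, Andr\'e--Deligne places it inside the (derived) generic Mumford--Tate group, and the characterization of sub-Hodge structures as $MT$-stable subspaces then kills all proper sub-Hodge structures at once, with no need to spread a hypothetical abelian subvariety. Note that your connectedness observation and the paper's Picard--Lefschetz trick are really the same phenomenon in two languages (a finite-index subgroup of a group generated by transvections has the same invariant subspaces). What your version buys: it makes explicit two things the paper leaves implicit, namely that the vanishing cohomology is all of $H^1(\wt{C_t},\QQ)$ because $q(S)=0$ forces $H^3(S,\QQ)=0$, and that the statement can only hold for a \emph{very general} $t$ (simplicity of the Jacobian is not a Zariski-open condition, and indeed the paper's own spreading argument is secretly an argument about the generic fibre, so it carries the same restriction even though the theorem is stated for a fixed $t$ on a transverse pencil). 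What the paper's version buys: it is self-contained, avoids Mumford--Tate and Andr\'e's theorem altogether, and needs nothing beyond the Picard--Lefschetz formula and the field-of-definition/spread construction. Your fallback suggestion (Deligne semisimplicity plus invariant cycles to spread a sub-Hodge structure at a very general point to a sub-variation) is essentially a restatement of the Andr\'e containment rather than an independent elementary alternative, so if you want to avoid the big theorem the paper's spread-plus-transvection argument is the cleaner way out.
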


\begin{proof}
Suppose that there exists an abelian subvariety $A_t$ of $J(\wt{C_t})$, which is proper and non-trivial. Extending the base field to $\CC(t)$, we have $A_t,J(\wt{C_t})$ can be viewed as abelian varieties over $\Spec(\CC(t))$. They have a minimal field of definition $L$, which is a finite extension of $\CC(t)$ in $\overline{\CC(t)}$. Choose a smooth projective curve $\mathcal P'$, such that $\CC(\mathcal P')=L$. Then we spread out $A_t,J(\wt{C_t})$ over some Zariski open set $U'$ of $\mathcal P'$. Let us denote the spreads as $\bcA,\bcJ$. Throwing out some more points from $U'$ we can assume that $\bcA\to U',\bcJ\to U'$ are smooth in the complex analytic sense. Also they are proper as the morphisms are locally projective. Therefore by Ehresmann's theorem the $f:\bcA\to U',g:\bcJ\to U'$ are fibrations in the complex analytic sense. Therefore the higher direct images $R^if_*\QQ,R^ig_*\QQ$ are locally constant sheaves and they give rise to local systems. Hence we have $\pi_1(U',0')$ action on $H^{2d-1}(A_t,\QQ)$ and on $H^{2g-1}(J(\wt{C_t}),\QQ)$, where $d=\dim(A_t)$ and $g$ is the genus of $\wt{C_t}$. The later cohomology group is isomorphic to $H^1(\wt{C_t},\QQ)$ in a natural way.  So we have an inclusion of $H^{2d-1}(A_t,\QQ)$ into $H^1(\wt{C_t},\QQ)$, and it is induced by a morphism of local systems whence a map of $\pi_1(U',0')$ modules.  Now let $U'$ be the inverse image of an open set $U$ of $\mathcal P$. Then  the map from $U'$ to $U$ is finite, hence it gives rise to a finite index subgroup $\pi_1(U',0')$ in $\pi_1(U,0)$. Let us show that $H_t=H^{2d-1}(A_t,\QQ)$ is $\pi_1(U,0)$- equivariant. Let $\gamma$ be in $\pi_1(U,0)$, so that $\gamma^m$ is in $\pi_1(U',0')$. Then we have (by the Picard Lefschetz formula),
$$\rho(\gamma^m)(\alpha)=\alpha\pm m\langle \alpha,\delta_{\gamma}\rangle \delta_{\gamma}$$
since $\rho(\gamma^m)(\alpha)-\alpha$ belong to $H_t$, we have that $\langle \alpha,\delta_{\gamma}\rangle \delta_{\gamma}$ belongs to $H_t$. Hence by Picard-Lefschetz formula $H_t$ is $\pi_1(U,0)$ equivariant. Since $H^1(\wt{C_t},\QQ)$ is $\pi_1(U,0)$ irreducible module, we have $H_t=0$ or all of $H^1(\wt{C_t},\QQ)$. Hence $A_t=\{0\}$ or $J(\wt{C_t})$.
\end{proof}

Now the kernel of $j_{t*}$ can be written as follows:

\begin{theorem}
\label{theorem6}

The kernel of $j_{t*}$ is a countable union of translates of an abelian subvariety of $J(\wt{C_t})$.
\end{theorem}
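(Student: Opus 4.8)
The plan is to realise $\ker(j_{t*})$ as a subgroup of the abelian variety $J(\wt{C_t})$ that is a countable union of Zariski closed subvarieties, and then to apply the Mumford--Roitman structure theorem for such subgroups \cite{R}, \cite{R1}, \cite{R2}. Under the canonical isomorphism $A_0(\wt{C_t})\cong J(\wt{C_t})$ for the smooth projective curve $\wt{C_t}$, the push-forward $j_{t*}$ becomes a homomorphism $J(\wt{C_t})\longrightarrow A_0(S)$, and I write $K$ for its kernel, a subgroup of $J(\wt{C_t})$ whose structure we must determine.

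The crucial step is to exhibit $K$ as a countable union of closed subvarieties. Let $g$ be the genus of $\wt{C_t}$, and for $d\geq g$ consider the surjective summation morphism
$$u_d\,:\,\wt{C_t}^{\,d}\times \wt{C_t}^{\,d}\,\longrightarrow\, J(\wt{C_t}),\qquad (x_1,\dots,x_d,y_1,\dots,y_d)\,\longmapsto\,\Big[\textstyle\sum_i x_i-\sum_i y_i\Big].$$
A point lies in $u_d^{-1}(K)$ precisely when the zero cycle $\sum_i j_t(x_i)-\sum_i j_t(y_i)$ is rationally equivalent to zero on $S$. Here I would invoke Roitman's theorem that rational equivalence of zero cycles is ind-algebraic: the locus $R\subset \Sym^d(S)\times \Sym^d(S)$ of pairs of effective cycles that are rationally equivalent is a countable union of closed subvarieties, because every rational equivalence is realised by finitely many maps $\PR^1\to S$ and these are parametrised by countably many algebraic families of bounded degree. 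Pulling $R$ back along $(x,y)\mapsto(\sum_i j_t(x_i),\sum_i j_t(y_i))$ shows $u_d^{-1}(K)=\bigcup_n W_{d,n}$ with each $W_{d,n}$ closed. Since $\wt{C_t}^{\,d}\times \wt{C_t}^{\,d}$ is projective, $u_d$ is proper, so the images $Z_{d,n}:=u_d(W_{d,n})$ are closed subvarieties of $J(\wt{C_t})$; they are contained in $K$, and by surjectivity of $u_d$ for $d\geq g$ we obtain $K=\bigcup_{d,n}Z_{d,n}$.

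It then remains to run the purely group-theoretic argument. Decomposing the $Z_{d,n}$ into irreducible components gives countably many irreducible closed subvarieties contained in $K$; let $d_0$ be the largest dimension occurring and $V$ a component of dimension $d_0$. I would set $B:=V-V=\{v-v'\,:\,v,v'\in V\}$, a closed irreducible subvariety of $J(\wt{C_t})$ contained in $K$ and containing $0$. Comparing dimensions against the maximality of $d_0$ forces $V-v'$ to be independent of $v'\in V$ and equal to $B$, from which one checks $w+B=B$ for every $w\in B$; hence $B$ is a subgroup, that is, an abelian subvariety, and each maximal component is a translate of $B$. Passing to the quotient $\pi\,:\,J(\wt{C_t})\to J(\wt{C_t})/B$, any positive-dimensional irreducible closed subvariety of $\ol{K}:=\pi(K)$ would pull back to a subvariety of $K$ of dimension exceeding $d_0$, which is impossible; thus $\ol{K}$ is at most countable. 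As the fibre of $\pi$ over each point of $\ol{K}$ is a single coset of $B$ lying in $K$, it follows that $K$ is the union of these countably many translates of $B$, which is the assertion.

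I expect the main obstacle to be the second paragraph, namely making precise and rigorous the claim that $u_d^{-1}(K)$ is a genuine countable union of Zariski closed sets; this rests on Roitman's control of rational equivalence on $S$ by countably many algebraic parameter spaces, whereas the concluding group-theoretic step is a standard consequence of the group law together with the maximality of $\dim B$.
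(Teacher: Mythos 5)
Your proposal is correct and takes essentially the same approach as the paper: both realise $\ker(j_{t*})$ inside $J(\wt{C_t})$ as a countable union of Zariski closed subvarieties by invoking the Mumford--Roitman/Voisin fact that rational equivalence of zero cycles on $S$ is parametrised by countably many algebraic families (the paper does this via the symmetric power $\Sym^g\wt{C_t}\to\Sym^g S$ and the map $\theta_S$, citing \cite{Vo}, rather than your summation maps $u_d$), and both then conclude by the uncountability-of-$\CC$ argument that a maximal-dimensional closed subvariety through the identity is an abelian subvariety whose translates exhaust the kernel. Your closing paragraph simply supplies details (the $B=V-V$ construction and the quotient by $B$) that the paper states as an assertion, so it is an amplification of the same proof rather than a different method.
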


\begin{proof}

 we have the following commutative diagram:

$$
  \diagram
 \Sym^g \wt{C_t}\ar[dd]_-{\theta_t} \ar[rr]^-{j_t} & & \Sym^g S \ar[dd]^-{\theta_S} \\ \\
  J(\wt{C_t}) \ar[rr]^-{j_{t*}} & & A_0(S)
  \enddiagram
  $$
Hence we can write $\ker(j_{t*})=\theta_t(j_t^{-1}\theta_S^{-1}(0))$. The fiber of $\theta_S$ is a countable union
of Zariski closed subsets in $\Sym^g S$ \cite{Vo}[Volume II, 10.7]. Therefore the kernel of $j_{t*}$ is a
countable union of Zariski closed subsets of $J(\wt{C_t})$. Since we work over $\CC$ which is uncountable, it is
possible to prove that there exists a unique Zariski closed subset of maximal dimension which passes through the
identity element and sits inside $\ker(j_{t*})$. Hence it will be an abelian variety. Therefore the kernel of
$j_{t*}$ is a countable union of shifts of an abelian variety $A_t$ in $J(\wt{C_t})$.
\end{proof}

\begin{theorem}
Let $S$ be a smooth projective surface with geometric genus greater than zero and irregularity equal to zero. Consider a finite map $f:S\to \CC\PR^2$. Then a generic pencil on $S$ is not a Lefschetz pencil.
\end{theorem}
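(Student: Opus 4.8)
The plan is to argue by contradiction: assume the generic pencil pulled back by $f$ is a Lefschetz pencil, and derive $A_0(S)=0$, which is impossible once $p_g(S)>0$. First I would fix a generic point $t\in U$ and, using the standing assumption, choose a Lefschetz pencil $\mathcal P$ through $t$ meeting $\bcD_S$ transversely, so that Theorem \ref{theorem8} applies. Since the irregularity of $S$ vanishes we have $b_1(S)=0$, hence $H^3(S,\QQ)=0$ by Poincar\'e duality, so the Gysin kernel $\ker(j_*)$ is all of $H^1(\wt{C_t},\QQ)$; thus the monodromy acts irreducibly on the full $H^1(\wt{C_t},\QQ)$ and Theorem \ref{theorem5} gives that $J(\wt{C_t})$ is simple. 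By Theorem \ref{theorem6} the group $\ker(j_{t*})$ is a countable union of translates of an abelian subvariety $A_t\subseteq J(\wt{C_t})$, so simplicity forces $A_t=0$ or $A_t=J(\wt{C_t})$.

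Next I would record two inputs. On one hand, the Cartesian square relating $\wt{C_t},C_t,S,\CC\PR^2$ together with $A_0(\CC\PR^2)=0$ shows that $f^*(J(C_t))$ is contained in $\ker(j_{t*})$; since $N$ is large the Veronese degree is at least $3$, whence the plane curve $C_t$ has positive genus, and because $f|_{\wt{C_t}}:\wt{C_t}\to C_t$ is finite the pullback $f^*$ has finite kernel, so $f^*(J(C_t))$ is a positive-dimensional, hence uncountable, abelian subvariety of $J(\wt{C_t})$. On the other hand, $\wt{C_t}=f^{-1}(\CC\PR^2\cap H)$ is the zero locus of the pullback of an ample class under the finite map $f$, hence an ample curve on $S$.

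Now I would treat the dichotomy as two cases. If $A_t=0$, then $\ker(j_{t*})$ is countable, contradicting that it contains the uncountable abelian variety $f^*(J(C_t))$. If $A_t=J(\wt{C_t})$, then $\ker(j_{t*})=A_0(\wt{C_t})$, i.e. $j_{t*}=0$; as this holds for the generic $t$, any two points lying on a common generic member $\wt{C_t}$ become rationally equivalent on $S$. Because the ample curves $\wt{C_t}$ sweep out $S$ and any two of them meet (two ample divisors on a surface have positive intersection number, hence nonempty intersection), a chaining argument through a common intersection point shows that all points of $S$ are rationally equivalent, so $CH_0(S)\cong\ZZ$ and $A_0(S)=0$. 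This contradicts Mumford's theorem, which asserts that $p_g(S)>0$ forces $A_0(S)$ to be infinite-dimensional, in particular nonzero. Hence no such Lefschetz pencil exists.

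The main obstacle is the second case. It is tempting to invoke surjectivity of $j_{t*}:A_0(\wt{C_t})\to A_0(S)$, but this map is in fact never surjective once $p_g(S)>0$ (surjectivity would exhibit every class of $A_0(S)$ as a bounded-degree difference of effective cycles, i.e. finite-dimensionality, contradicting Mumford). The correct mechanism is geometric: one must propagate the fiberwise vanishing $j_{t*}=0$ across the whole covering family of ample curves and use that ample divisors pairwise intersect, and only then feed the resulting vanishing $A_0(S)=0$ into Mumford's infinite-dimensionality theorem. Care is also needed to guarantee that the vanishing holds for enough $t$ to cover a general point of $S$, which follows from the genericity built into the Lefschetz hypothesis since the locus of such $t$ is dense open in $U$ and meets the hyperplane of parameters through a general point.
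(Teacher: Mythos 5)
Your proposal is correct and takes essentially the same route as the paper: the dichotomy $A_t=0$ or $A_t=J(\wt{C_t})$ obtained from Theorems \ref{theorem5} and \ref{theorem6}, with $A_t=0$ excluded because $\ker(j_{t*})$ contains the positive-dimensional image of $J(C_t)$ (non-rational $C_t$), and $A_t=J(\wt{C_t})$ excluded because it forces $A_0(S)=0$, contradicting $p_g(S)>0$. The differences are purely expository refinements of points the paper leaves implicit --- you spell out that $q=0$ gives $H^3(S,\QQ)=0$ so the vanishing cohomology is all of $H^1(\wt{C_t},\QQ)$, you invoke Mumford's infinite-dimensionality theorem by name, and you replace the paper's ``two points lie on a common $\wt{C_t}$'' by a chaining argument through pairwise intersections of ample curves --- so this is the same proof, slightly more carefully written.
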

\begin{proof}
By the above, the abelian variety $A_t$ is either trivial or all of $J(\wt{C_t})$. Now suppose that a generic pencil $\mathcal P$ is a Lefschetz pencil on $S$. Then through a general point $t$ in ${\CC\PR^N}^*$ we have a Lefschetz pencil. Therefore $A_t$ is either trivial or $J(\wt{C_t})$ for all general $t$. Note that $A_t$ gives rise to a sub-variation of Hodge structures of the variation of Hodge structures coming from the Jacobian fibration of the family of curves $\wt{C_t}$ on $S$. Therefore either for all $t$ general $A_t=0$ or it is all of $J(\wt{C_t})$. In the second case we have that $A_0(S)$ is $0$. Indeed given any two points we can find a $\wt{C_t}$, which passes through the two points. Then $j_{t*}(P-Q)$ is $0$. Since the group $A_0(S)$ is generated by cycles of the form $j_{t*}(P-Q)$, the group itself is zero. Therefore for a general $t$, the kernel of $j_{t*}$ is countable. But according to the above it contains the image of  $J(C_t)$. Hence it follows that $C_t$ is rational. This is absurd because  we could start we a very ample line bundle on $\CC\PR^2$ such that the curves in the linear system are non-rational. Hence the kernel cannot be countable. So it all means that a generic pencil on $S$ is not a Lefschetz pencil.
\end{proof}

\section{Lefschetz pencils on Enriques Surfaces}

In this section we want to study the case of an Enriques surface. So let $S$ be a K3 surface with an anti-symplectic, fixed point free involution $i$ on it. Then $S/i=S'$ is an Enriques surface. Consider a finite $f$ map from $S'$ to $\CC\PR^2$, where $\CC\PR^2$ is embedded in some $\CC\PR^N$. Then consider a pencil of hyperplanes on $\CC\PR^N$, and pull it back to $S'$. The question is for a generic choice of the pencil, can it be a Lefschetz pencil?

For that we consider the composition $g:S\to S'\to \CC\PR^2$. Let $\mathcal P$ be a pencil of hyperplanes in $\CC\PR^N$. Let for $t$ in $\mathcal P$, $C_t$ is the pull-back of a hyperplane under $f$ and $\wt{C_t}$ is its double cover on $S$. Then we have the following Cartesian square.
$$
  \diagram
 \wt{C_t}\ar[dd]_-{} \ar[rr]^-{} & & C_t \ar[dd]^-{} \\ \\
  S \ar[rr]^-{} & & S'
  \enddiagram
  $$
This gives the following commutative square at the level of algebraically trivial zero cycles.
$$
  \diagram
 A_0(C_t)\ar[dd]_-{} \ar[rr]^-{} & & A_0(\wt{C_t}) \ar[dd]^-{j_{t*}} \\ \\
  A_0(S') \ar[rr]^-{} & & A_0(S)
  \enddiagram
  $$
By \cite{BKL}, the group $A_0(S')$ is trivial, hence the image of $J(C_t)$ is contained in the kernel of the push-forward $j_{t*}$ from $J(\wt{C_t})$ to $A_0(S)$. Suppose that a generic pencil pulled back to $S'$ is Lefschetz. Then by Theorem \ref{theorem5}, $J(C_t)$ is a simple abelian variety and it is contained in a countable union of translates of an abelian variety $A_t$ in $\ker(j_{t*})$, by Theorem \ref{theorem6}. Now consider the map at the level of cohomology from $H^1(C_t,\QQ)$ to $H^1(\wt{C_t},\QQ)$. Since $S\dashrightarrow \mathcal P$ gives rise to a fibration, by blowing up the base locus of this rational map, we have $\pi_1(\mathcal P\setminus 0_1,\cdots,0_m)$ acts on $H^1(\wt{C_t},\QQ)$. On  the other hand the action of $\pi_1(\mathcal P\setminus 0_1,\cdots,0_m)$ on $H^1(C_t,\QQ)$ is irreducible by Theorem \ref{theorem8}. Hence the image of $H^1(C_t,\QQ)$ in $H^1(\wt{C_t},\QQ)$ has an action of $\pi_1(\mathcal P\setminus 0_1,\cdots,0_m)$, which is irreducible in the sense that it has no non-trivial and proper $\pi_1(\mathcal P\setminus 0_1,\cdots,0_m)$ equivariant subspace. Denote the image of $H^1(C_t,\QQ)$ in $H^1(\wt{C_t},\QQ)$ by $W$ and its orthogonal complement under the bilinear form on $H^1(\wt{C_t},\QQ)$ by $W^{\perp}$. We prove the following:

\begin{proposition}
$W^{\perp}$ is equivariant under the action of the fundamental group $\pi_1(\mathcal P\setminus 0_1,\cdots,0_m)$ and is irreducible.
\end{proposition}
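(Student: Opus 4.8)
The plan is to prove the two assertions in turn: equivariance is a formal consequence of the invariance of the intersection form, while irreducibility is the substantive point and I would obtain it by reducing the monodromy on $W^{\perp}$ to a Picard--Lefschetz situation on the anti-invariant part of the deck involution, then rerunning the proof of Theorem \ref{theorem8}.

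\emph{Equivariance.} The monodromy group $\pi_1(\mathcal P\setminus\{0_1,\cdots,0_m\})$ acts on $H^1(\wt{C_t},\QQ)$ by isometries of the intersection form, since each generator acts by a Picard--Lefschetz transformation, which preserves $\langle-,-\rangle$. The subspace $W$ is the image of the pullback map $H^1(C_t,\QQ)\to H^1(\wt{C_t},\QQ)$, a morphism of local systems, so $W$ is $\pi_1$-stable. Hence for $v\in W^{\perp}$, $w\in W$ and $\gamma\in\pi_1$ I have $\langle\rho(\gamma)v,w\rangle=\langle v,\rho(\gamma)^{-1}w\rangle=0$ because $\rho(\gamma)^{-1}w\in W$; thus $\rho(\gamma)v\in W^{\perp}$ and $W^{\perp}$ is equivariant.

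\emph{Identification of $W^{\perp}$.} Since $g=f\circ q$ factors through the quotient $q:S\to S'$ one has $g\circ i=g$, so the involution $i$ preserves each fibre $\wt{C_t}=g^{-1}(\ell_t)$ and restricts to the fixed-point-free deck involution $\iota$ of the étale double cover $\wt{C_t}\to C_t$. Writing $H^1(\wt{C_t},\QQ)=H^{+}\oplus H^{-}$ for the $\iota_*$-eigenspace decomposition, pullback identifies $W=H^{+}$, and because $\iota$ is holomorphic (hence an isometry) the two eigenspaces are orthogonal, so $W^{\perp}=H^{-}$, the $H^1$ of $\Prym(\wt{C_t}/C_t)$. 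As $i$ acts on the whole family, $\iota_*$ commutes with the monodromy, which reconfirms that $W^{\perp}=H^{-}$ is $\pi_1$-stable.

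\emph{Irreducibility.} Each singular fibre carries two ordinary double points, the two preimages of the single node of $C_t$ under the étale cover, swapped by $\iota$; write $\delta_1^{(k)},\delta_2^{(k)}$ for the corresponding vanishing cycles at $0_k$ and $\delta_k^{-}:=\delta_1^{(k)}-\delta_2^{(k)}$ for the anti-invariant combination. Using $\iota_*\delta_1^{(k)}=\pm\delta_2^{(k)}$, the isometry property, and $\langle\delta_1^{(k)},\delta_2^{(k)}\rangle=0$ (disjoint supports), a direct computation shows that the local monodromy $T_k=T_{\delta_1^{(k)}}T_{\delta_2^{(k)}}$ restricted to $\alpha\in H^{-}$ is the Picard--Lefschetz-type map $T_k(\alpha)=\alpha\pm\tfrac12\langle\alpha,\delta_k^{-}\rangle\,\delta_k^{-}$. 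Since $H^3(S,\QQ)=0$ for the K3 surface $S$, the vanishing cohomology is all of $H^1(\wt{C_t},\QQ)$, so the $\delta_j^{(k)}$ span it, and projecting by $\tfrac12(1-\iota_*)$ shows the $\delta_k^{-}$ span $H^{-}$. The form is non-degenerate on $H^1(\wt{C_t},\QQ)$ and $W\perp W^{\perp}$ with the form non-degenerate on $W$, hence non-degenerate on $W^{\perp}=H^{-}$. Then the argument of Theorem \ref{theorem8} transports verbatim: given a nonzero $\pi_1$-submodule $V\subseteq H^{-}$ and $0\neq\alpha\in V$, non-degeneracy produces $k$ with $\langle\alpha,\delta_k^{-}\rangle\neq0$, so $T_k(\alpha)-\alpha$ is a nonzero multiple of $\delta_k^{-}$ lying in $V$, whence $\delta_k^{-}\in V$, and conjugacy of the $\delta_k^{-}$ forces $V=H^{-}$.

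The hard part is exactly the conjugacy of the anti-invariant vanishing classes, because the pencil pulled back to $S$ is \emph{not} Lefschetz (two nodes per singular fibre), so Theorems \ref{theorem4} and \ref{theorem8} do not apply off the shelf. I would establish it by adapting the path-lifting argument of Theorem \ref{theorem4}: the one-node hyperplane sections of $\CC\PR^2$ form a connected locus, paths there lift to the complement of $\bcD_S$ and connect the singular points $0_k$, and since $\rho$ commutes with $\iota_*$ such a lift carries the pair $\{\delta_1^{(k)},\delta_2^{(k)}\}$ to $\{\delta_1^{(l)},\delta_2^{(l)}\}$, hence $\delta_k^{-}$ to $\pm\delta_l^{-}$. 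The remaining technical points are to verify that $\iota_*$ genuinely swaps the two local vanishing cycles, so that $\delta_k^{-}\neq0$ whenever $g(C_t)\geq 2$, and to pin down the signs in the Picard--Lefschetz computation above.
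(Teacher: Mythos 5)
Your proposal is correct in substance, but it takes a genuinely different --- and in fact more robust --- route than the paper. The paper proves equivariance exactly as you do, using that the monodromy preserves the intersection form and stabilizes $W$. For irreducibility, however, the paper argues formally: given a stable subspace $W'\subseteq W^{\perp}$ it introduces $V=\{v\in W \mid \langle v,w\rangle=0 \ \forall w\in W'\}$, proves $V$ is stable, invokes irreducibility of $W$ to get $V=0$ or $W$, and concludes that $W'$ is $0$ or everything. As written this is vacuous: because $W'\subseteq W^{\perp}$, every $v\in W$ already pairs to zero with every $w\in W'$, so $V=W$ no matter what $W'$ is, and no conclusion about $W'$ follows. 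Indeed, irreducibility of $W^{\perp}$ can never be a purely formal consequence of irreducibility of $W$ (a group acting trivially on $W^{\perp}$ satisfies all the formal hypotheses), so some actual computation of the monodromy on $W^{\perp}$ must enter. That is precisely what your proof supplies: the identification $W=H^{+}$, $W^{\perp}=H^{-}$ via the deck involution $\iota$ of the \'etale cover; the observation that each singular fibre upstairs has two nodes swapped by $\iota$, so the pulled-back pencil is not Lefschetz and Theorems \ref{theorem4} and \ref{theorem8} do not apply off the shelf; the Prym-type Picard--Lefschetz formula $T_k(\alpha)=\alpha\pm\tfrac{1}{2}\langle\alpha,\delta_k^{-}\rangle\delta_k^{-}$ on $H^{-}$ (your computation checks out: $\langle\delta_1^{(k)},\delta_2^{(k)}\rangle=0$ together with anti-invariance of $\alpha$ gives $\langle\alpha,\delta_2^{(k)}\rangle=-\langle\alpha,\delta_1^{(k)}\rangle$); the spanning of $H^{-}$ by the $\delta_k^{-}$ using $H^3(S,\QQ)=0$; and then a rerun of the Theorem \ref{theorem8} argument. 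The one step you leave as a sketch is the monodromy-conjugacy of the $\delta_k^{-}$, which you rightly flag as the crux; your plan --- path-lifting along the connected locus of one-nodal sections downstairs, using that parallel transport commutes with $\iota_*$, so node pairs go to node pairs and $\delta_k^{-}\mapsto\pm\delta_l^{-}$, the sign being harmless for the submodule argument --- is the standard Prym-monodromy argument and parallels the paper's proof of Theorem \ref{theorem4}, so it fills in correctly. In short, your route costs more work but actually carries the monodromy content the statement needs, which the paper's shorter formal argument does not.
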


\begin{proof}
Let $w$ belongs to $W^{\perp}$, then for all $v$ in $W$, we have $\langle v,w \rangle=0$. We have to prove that for all $\gamma$ in $\pi_1(\mathcal P\setminus 0_1,\cdots,0_m)$, $\gamma.w$ is again in $W^{\perp}$. Observe that we have
$$\langle v,\gamma.w\rangle=\langle \gamma.v', \gamma.w \rangle=\langle v',w\rangle=0$$
this happens for all $v'$. Hence $\gamma.w$ is in $W^{\perp}$ for all $\gamma$.

Now suppose that $W'$ is a $\pi_1(\mathcal P\setminus 0_1,\cdots,0_m)$ equivariant subspace of $W^{\perp}$. Then consider the subspace of $W$, given by
$$\{v\in W|\langle v,w\rangle=0, \forall w\in  W'\}\;.$$
We prove that $V$ is $\pi_1(\mathcal P\setminus 0_1,\cdots,0_m)$ equivariant. Let $\gamma$ belongs to $\pi_1(\mathcal P\setminus 0_1,\cdots,0_m)$. Then we have
$$\langle \gamma.v,w\rangle=\langle \gamma.v, \gamma.w'\rangle=\langle v,w'\rangle=0\;.$$
This happens for all $w'$ in $W'$. Hence $\gamma.v$ is in $V$, for all $v$ in $V$ and for all $\gamma$ in $\pi_1(\mathcal P\setminus 0_1,\cdots,0_m)$. Therefore it follows by Picrad-Lefschetz formula that $V=0$ or $W$. Consequently $W'=W$ or $0$. Note that here the fact that $W$ is finite dimensional is necessary.
\end{proof}

\begin{theorem}
The pull-back of a generic pencil on $\CC\PR^2$ to $S'$ is not of Lefschetz type.
\end{theorem}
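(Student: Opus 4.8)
The plan is to assume, for contradiction, that a generic pencil pulled back to $S'$ is a Lefschetz pencil, and to derive a numerical contradiction on the dimension of the kernel abelian variety $A_t$ by playing the monodromy decomposition of $H^1(\wt{C_t},\QQ)$ against Mumford's infinitesimal invariant for the K3 surface $S$. First I would assemble the structure already in place. Since $A_0(S')=0$, the image of $J(C_t)$ lies in $\ker(j_{t*})$, and being a connected abelian subvariety through the origin it sits inside the identity component $A_t$ of Theorem \ref{theorem6}; hence $W\subseteq H_t$, where $H_t:=H^{1}(A_t,\QQ)\subseteq H^1(\wt{C_t},\QQ)$. By the method of Theorem \ref{theorem5} the sub-variation $H_t$ is $\pi_1(\mathcal P\setminus 0_1,\cdots,0_m)$-equivariant. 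Intersecting the equivariant space $H_t$ with the irreducible equivariant space $W^{\perp}$ (irreducible by the Proposition) gives $H_t\cap W^{\perp}\in\{0,W^{\perp}\}$, so the only possibilities are
$$H_t=W\quad\text{or}\quad H_t=W\oplus W^{\perp}=H^1(\wt{C_t},\QQ)\;,$$
equivalently $\dim A_t=g$ or $\dim A_t=2g-1$, where $g$ is the genus of $C_t$ and $2g-1$ that of $\wt{C_t}$ (Riemann--Hurwitz for the \'etale double cover $\wt{C_t}\to C_t$ induced by the fixed-point-free involution $i$).

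Next I would dispose of the full case $H_t=H^1(\wt{C_t},\QQ)$ exactly as in the main theorem: then $A_t=J(\wt{C_t})$, so for general $t$ the kernel of $j_{t*}$ is everything up to a countable set, and since any two points of $S$ lie on some $\wt{C_t}$ this forces $A_0(S)=0$. This contradicts Mumford's theorem, because the K3 surface $S$ has $p_g=1>0$ and hence $A_0(S)$ is infinite dimensional.

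The crux is then to exclude $H_t=W$, and here I would bring in the holomorphic two-form $\omega$ spanning $H^0(S,\Omega^2_S)$. Because $i$ is anti-symplectic, $i^{*}\omega=-\omega$; contracting $\omega$ with the $i$-invariant normal direction of the pencil and restricting to $\wt{C_t}$ produces an $i$-anti-invariant holomorphic one-form on $\wt{C_t}$, that is, a nonzero element of $(W^{\perp})^{1,0}$, nonzero because the K3 form is nowhere vanishing. By the Mumford--Roitman description of the kernel (\cite{R}, \cite{R1}, \cite{R2}, \cite{Vo}[Vol.~II, 10.7]) the quotient $J(\wt{C_t})/A_t$ injects into $A_0(S)$ with $H^{1,0}$ equal to the image of the restriction map $H^0(S,\Omega^2_S)\to H^0(\wt{C_t},\Omega^1)$, which is therefore one-dimensional; thus $\dim A_t=2g-2$. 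Since this image lands in $(W^{\perp})^{1,0}$, the subspace $A_t\cap W^{\perp}$ has dimension $(2g-2)-g=g-2$, a proper nonzero $\pi_1$-equivariant sub-Hodge-structure of $W^{\perp}$ whenever $g\ge 3$---contradicting the irreducibility of $W^{\perp}$ from the Proposition. Equivalently, $2g-2$ matches neither $g$ nor $2g-1$ once $g\ge 3$, contradicting the dichotomy directly. Choosing the Veronese embedding of high enough degree guarantees that the general $C_t$ has genus $g\ge 3$, so both admissible values of $\dim A_t$ are excluded and the pencil cannot be Lefschetz.

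The step I expect to be the main obstacle is the Mumford--Roitman identification of $A_t$ as the annihilator of $\mathrm{res}\bigl(H^0(S,\Omega^2_S)\bigr)$, that is, pinning down $\dim A_t=2g-2$ rather than merely bounding it; this requires the infinitesimal (tangent-space) computation for $\Sym^{g}\wt{C_t}\to A_0(S)$ together with the fact that the nowhere-zero K3 two-form restricts to a nonzero anti-invariant one-form on a general fiber of the pencil. Everything else is formal from the Proposition, Theorems \ref{theorem5}--\ref{theorem8}, and the vanishing $A_0(S')=0$.
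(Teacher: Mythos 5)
Your overall architecture --- the dichotomy $H_t=W$ or $H_t=H^1(\wt{C_t},\QQ)$, obtained by intersecting the equivariant subspace $H_t$ with the irreducible $W^{\perp}$, and the elimination of the case $H_t=H^1(\wt{C_t},\QQ)$ because it forces $A_0(S)=0$ against Mumford's theorem --- is exactly the paper's (the paper phrases the dichotomy as $A_t$ isogenous to $J(\wt{C_t})$ or to $J(C_t)$, using simplicity of $J(C_t)^{\perp}$). The genuine gap is in your treatment of the remaining case $H_t=W$, i.e.\ $A_t\sim J(C_t)$. There you invoke a ``Mumford--Roitman identification'' pinning down $\dim A_t=2g-2$, the dimension of the annihilator of the restricted $2$-form. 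But Mumford--Roitman techniques prove only one inclusion: an abelian subvariety contained in $\ker(j_{t*})$ must be isotropic for (annihilated by) the forms induced from $H^{2,0}(S)$, i.e.\ $A_t\subseteq\mathrm{Ann}$. The reverse inclusion $\mathrm{Ann}\subseteq\ker(j_{t*})$ --- which is what you need to get the exact value $2g-2$ rather than an upper bound --- asserts that every class which is Hodge-theoretically permitted to die under $j_{t*}$ actually dies; this is a Bloch--Beilinson type statement, open in general, and not obtainable from the tangent-space computation you describe (if it were, it would prove rational equivalences that nobody knows how to produce). Worse, in the very case you want to exclude, the inclusion that \emph{is} available is vacuous: the one-form you construct is $i$-anti-invariant, hence it automatically pairs to zero with the $i$-invariant tangent space of $A_t\sim J(C_t)$, so Mumford's constraint is already satisfied with $\dim A_t=g$ and produces no contradiction at all.

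The paper closes this case by a different mechanism, which is available to you but unused in your proposal: the involution acting on zero-cycles. Since $A_0(S')=0$, one has $(1+i_*)z=\pi^{*}\pi_{*}z=0$ for $z\in A_0(S)$, so $i_{*}=-\mathrm{id}$ on $A_0(S)$. In the case $A_t\sim J(C_t)$ the kernel consists (up to the countable ambiguity of translates) of $i$-invariant classes in $J(\wt{C_t})$; their images in $A_0(S)$ are then simultaneously $i$-invariant and $i$-anti-invariant, hence $2$-torsion, hence zero by Roitman's torsion theorem (the Albanese of the K3 surface $S$ is trivial). Running this through, the paper concludes the kernel would have to be trivial, while it contains the $g$-dimensional image of $J(C_t)$; so $C_t$ would be rational, contradicting the choice of a very ample linear system on $\CC\PR^2$ whose members have positive genus. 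So you should keep your dichotomy and your first case, but replace the $\dim A_t=2g-2$ step by this involution-plus-Roitman argument.
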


\begin{proof}
Now it follows from the above proposition that the abelian variety $J(C_t)^{\perp}$ in $J(\wt{C_t})$ is also simple. So consider the kernel of $j_{t*}$, which is a countable union of translates of an abelian subvariety $A_t$ of $J(\wt{C_t})$. We have $A_{t}^{\perp}$ is inside $J(C_t)^{\perp}$. Since $J(C_t)^{\perp}$ is simple we have $A_t^{\perp}$ is trivial or all of $J(C_t)^{\perp}$. Consequently we have $A_t$ is isogenous to $J(\wt{C_t})$ or it is isogenous to $J(C_t)$. In the first case all of $J(\wt{C_t})$ goes to zero. Given any two points on $S$, there exists a $\wt{C_t}$, which passes through this two points. Since $J(\wt{C_t})$ goes to zero under $j_{t*}$, for any $t$ such that $\wt{C_t}$ is smooth, we have $A_0(S)$ is zero. This is a contradiction to the fact that $A_0(S)$ is not isomorphism to the albanese variety of $S$. Therefore we have that $A_t$ is isogneous to $J(C_t)$. From this it follows that all elements in the kernel are invariant under the action of the involution $i$. On  the other and the group $A_0(S)$ has the involution acting as $-1$ on the Chow group. So all elements in the kernel of $j_{t*}$ is 2-torsion, so by Roitman's theorem the kernel is trivial. Hence $C_t$ is rational. But this may not be true because we can take the pull-back of a very ample divisor on $\CC\PR^2$ to $S'$. By adjunction formula and ampleness criterion it will follow that a curve in the corresponding linear system is non-rational.
\end{proof}

On the other hand we have $A_0(S')=0$, implying that the $i$-invariant part of $A_0(S)$ is trivial. So it follows
that all elements of $A_0(S)$ are $i$-antiinvariant. That is for $z$ in $A_0(S)$ we have $i_*(z)=-z$. Since the
geometric genus of the surface $S$ is greater than zero, the group $A_0(S)$ is not isomorphic to the albanese
variety of $S$. Or in other words the natural map from the symmetric powers of $S$ to $A_0(S)$ are never
surjective. We also have the following notion of finite dimensionality of a subgroup of $A_0(S)$, due to Roitman
\cite{R1}.

\begin{definition}
A subgroup $P$ of $A_0(S)$ is finite dimensional, if there exists a smooth projective variety $W$ and a correspondence $\Gamma$ on $W\times S$, such that $P$ is contained in the set $\{\Gamma_*(w):w\in W\}$.
\end{definition}

Let $S$ be a smooth projective surface. Suppose that $Z$ is a correspondence on $S\times S$ such that image of
$Z_*$ from $A_0(S)$ to $A_0(S)$ is finite dimensional. Then we have the following result due to Voisin \cite{Voi}.

\begin{theorem}[{\cite{Voi}[Theorem 2.3]{Voi}}]\label{theorem7}
Let $Z$ be a correspondence on $S\times S$ such that image of $Z_*$ from $A_0(S)$ to $A_0(S)$ is finite dimensional in the above sense. Then $Z_*$ factors through the albanese variety of $S$.
\end{theorem}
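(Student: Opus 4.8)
The plan is to reformulate ``factoring through the Albanese'' in terms of the action of the correspondence on holomorphic two-forms, and then to extract the vanishing of that action from the finite-dimensionality hypothesis by Mumford's infinitesimal method. Write $T\,:=\,\ker\bigl(\alb\,:\,A_0(S)\,\longrightarrow\, \Alb(S)\bigr)$ for the Albanese kernel. Since $\alb$ is functorial for the action of correspondences, one has $\alb(Z_*t)=0$ for $t\in T$, so that $Z_*(T)\subseteq T$; moreover $Z_*(T)$ is finite dimensional, being a subgroup of the finite dimensional group $\mathrm{Im}(Z_*)$. By Roitman's theorem the map $\alb$ is injective on torsion, so $T$ is torsion free. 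Now saying that $Z_*$ factors through $\Alb(S)$ is precisely saying that $Z_*|_T=0$, and I would connect this to Hodge theory through the classical principle (Mumford, Bloch--Srinivas) that the triviality of $Z_*$ on $T$ is governed by the induced action $Z^*\,:\,H^{2,0}(S)=H^0(S,\Omega^2_S)\longrightarrow H^{2,0}(S)$ on holomorphic two-forms.

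The main step, which I expect to be the hard part, is to show that finite dimensionality of $\mathrm{Im}(Z_*)$ forces $Z^*$ to vanish on $H^{2,0}(S)$; I would argue by contraposition, following Mumford. Fix a base point $x_0\in S$ and consider, for each $N$, the morphism $\Phi_N\,:\,S^N\,\longrightarrow\, A_0(S)$ sending $(x_1,\dots,x_N)$ to $Z_*\bigl(\sum_{i}(x_i-x_0)\bigr)$. If some holomorphic two-form $\omega$ satisfied $Z^*\omega\neq 0$, then Mumford's computation of the infinitesimal invariant of this family identifies the relevant pullback $\Phi_N^*$ with $\sum_i \mathrm{pr}_i^*\,(Z^*\omega)$, a two-form on $S^N$ whose rank grows without bound as $N\to\infty$. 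On the other hand, a family whose image lies in a fixed finite dimensional subgroup $\{\Gamma_*(w):w\in W\}$ can only pull back two-forms from $W$, so the corresponding rank is bounded in terms of $\dim W$ and $h^{2,0}(W)$. For $N$ large this is a contradiction, whence $Z^*|_{H^{2,0}(S)}=0$. This is exactly the stage at which the uncountability of $\CC$ and the positivity of $p_g$ enter the Mumford--Roitman analysis, and making the rank estimate precise is the genuine obstacle.

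It then remains to deduce $Z_*|_T=0$ from $Z^*|_{H^{2,0}(S)}=0$, which I would obtain by a Bloch--Srinivas decomposition of the cycle $Z$ itself. Spreading $Z$ out and running the decomposition-of-the-diagonal argument under the hypothesis $Z^*H^{2,0}(S)=0$, one writes, modulo rational equivalence in $\CH^2(S\times S)$,
$$Z\,\equiv\, Z_1+Z_2\, ,$$
with $Z_1$ supported on $D\times S$ and $Z_2$ supported on $S\times D'$ for suitable divisors $D,D'\subset S$. A zero-cycle of degree zero can always be represented away from $D$, so $Z_{1*}$ acts as zero on $A_0(S)$; and $Z_{2*}$, being supported over the curve $D'$, sees only $H^1$-data and therefore factors through $\Alb(S)$. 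Combining the two pieces gives $Z_*|_T=0$, i.e. $Z_*$ factors through $\Alb(S)$, as claimed. Equivalently, one could finish via the structure result behind Theorem \ref{theorem6}: the finite dimensional subgroup $Z_*(T)$ of the torsion-free group $T$ is a countable union of translates of an abelian subvariety $B\subseteq A_0(S)$ lying in $T$, and the vanishing of the two-form action is exactly what forces $B=0$ and hence the collapse of $Z_*(T)$.
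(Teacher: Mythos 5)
A preliminary remark: the paper does not prove this statement at all --- it is quoted as an external result from Voisin \cite{Voi} (her Theorem 2.3), so your attempt can only be measured against Voisin's own argument. That argument is Chow-theoretic throughout: it combines Roitman's structure theory for finite dimensional subgroups of $A_0(S)$ \cite{R1} (such a subgroup is representable, i.e.\ receives a surjective regular homomorphism from an abelian variety) with Roitman's torsion theorem \cite{R2}, which makes the Albanese kernel $T$ torsion free; a representable subgroup of $T$ is then forced to vanish. No step of it passes through holomorphic two-forms.

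Your proposal, by contrast, has a genuine gap at its decisive step. The first half --- Mumford's infinitesimal argument showing that finite dimensionality of $\im(Z_*)$ forces $Z^*|_{H^{2,0}(S)}=0$ --- is a correct and standard generalization of Mumford's theorem. But the second half, deducing $Z_*|_T=0$ from $Z^*|_{H^{2,0}(S)}=0$ ``by a Bloch--Srinivas decomposition'', runs that technique in the wrong direction. Bloch--Srinivas takes a Chow-theoretic hypothesis (for instance: $Z_*$ annihilates the Albanese kernel over a universal domain, or $\CH_0$ is supported on a curve) and produces from it the decomposition $Z\equiv Z_1+Z_2$ together with cohomological consequences such as $Z^*H^{2,0}(S)=0$. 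There is no mechanism that manufactures a rational equivalence in $\CH^2(S\times S)$ out of the purely Hodge-theoretic hypothesis $Z^*H^{2,0}(S)=0$; the implication you invoke is precisely Bloch's conjecture for correspondences on surfaces, which is open. A sanity check makes this vivid: for a symplectic involution $i$ of a K3 surface, the correspondence $Z=\Delta_S-\Gr(i)$ kills $H^{2,0}$ by the very definition of symplectic, so your step would prove in one line that $i_*=\id$ on $\CH_0$ --- but that is the main theorem of the very paper \cite{Voi} being cited here, and it is obtained there only by combining Theorem \ref{theorem7} with a hard geometric finite-dimensionality argument (the Prym-fibration argument which the present paper adapts in its Enriques section). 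Your alternative ending does not repair this: what forces the abelian subvariety $B\subseteq T$ to vanish is not the two-form vanishing but the torsion-freeness of $T$ (a positive-dimensional abelian variety has nonzero torsion, which Roitman's theorem excludes inside $T$); and even granting $B=0$, you only conclude that $Z_*(T)$ is countable, whereas a countable, divisible, torsion-free group --- a $\QQ$-vector space --- need not be zero. Closing that final gap from ``countable'' to ``zero'' is exactly what Roitman's representability theory accomplishes in the actual proof.
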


With this information at hand, we prove the following theorem (the proof of this theorem goes along the line of
proof of Proposition 2.6 in \cite{Voi}):

\begin{theorem}
Let $S$ be a K3 surface with an anti-symplectic involution $i$, so that $S/i$ is an Enriques surface $S'$. Then there does not exist a rational curve on $S$ which is ample and invariant under the involution, or passes through one of the fixed points of the involution.
\end{theorem}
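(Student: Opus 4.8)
The plan is to argue by contradiction, and I would reach for a geometric argument first, since it is the cleanest. Because $i$ is fixed-point-free the clause ``passes through one of the fixed points'' is vacuous here, so the only content is the non-existence of an $i$-invariant rational curve; in fact amplitude will play no role in the direct argument. First I would suppose that $R\subset S$ is an irreducible rational curve with $i(R)=R$, and restrict $i$ to $R$ to obtain an automorphism $i|_R$ of $R$ of order dividing $2$.

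The key step is to pass to the normalization $\nu\,:\,\PR^1\,\longrightarrow\, R$. By the universal property of normalization $i|_R$ lifts to an automorphism $\tilde i\in \Aut(\PR^1)$, and since the only lift of $\mathrm{id}_R$ is the identity, the relation $(i|_R)^2=\mathrm{id}_R$ forces $\tilde i^{\,2}=\mathrm{id}$. There are two cases. If $\tilde i=\mathrm{id}$ then $i|_R=\mathrm{id}_R$, so every point of $R$ is fixed by $i$, contradicting that $i$ acts freely. If $\tilde i\neq\mathrm{id}$ then $\tilde i$ is a non-trivial automorphism of $\PR^1$ of order two, hence has exactly two fixed points $\tilde p_1,\tilde p_2$; setting $p_k=\nu(\tilde p_k)$ we get $i(p_k)=i(\nu(\tilde p_k))=\nu(\tilde i(\tilde p_k))=\nu(\tilde p_k)=p_k$, so $p_k\in S$ is a genuine fixed point of $i$. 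Either way $i$ has a fixed point on $S$, which is the desired contradiction. The only subtlety is that $p_k$ might be a singular point of $R$, but this is harmless: $i(p_k)=p_k$ is an equality of points of $S$ regardless of how $i$ acts on the branches of $R$ at $p_k$.

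Alternatively, to stay inside the Chow-theoretic framework of the previous theorems and follow the line of Proposition~2.6 in \cite{Voi}, I would build from $R$ a self-correspondence $Z$ on $S\times S$ whose induced map $Z_*$ on $A_0(S)$ takes values among the classes carried by the rational curve $R$, and is therefore finite-dimensional in Roitman's sense, parametrised by a symmetric product of $R$. Theorem~\ref{theorem7} then forces $Z_*$ to factor through $\Alb(S)$, which vanishes because $S$ is a K3 surface, so $Z_*=0$ on $A_0(S)$. Playing this against the relation $i_*=-\mathrm{id}$ on $A_0(S)$ established above, together with the $i$-invariance of $R$, should annihilate $A_0(S)$, contradicting $p_g(S)>0$, which by Mumford's theorem makes $A_0(S)$ infinite-dimensional.

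The hard part in the direct argument is essentially nil: it is a one-line use of the fact that an involution of $\PR^1$ has fixed points, so the only bookkeeping is at the singular points of $R$. In the cycle-theoretic route, by contrast, the genuine obstacle is to engineer the correspondence $Z$ so that its image is provably finite-dimensional while its action on $A_0(S)$ stays nonzero; this is exactly the step where amplitude of $R$ and the antiinvariance $i_*=-\mathrm{id}$ must be balanced against the rationality of $R$, and it is the part I would expect to require the most care.
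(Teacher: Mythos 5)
Your direct argument is correct and complete, but it takes a genuinely different --- and far more elementary --- route than the paper. The paper proves the theorem by the cycle-theoretic method of Voisin's Proposition 2.6: the map $S^g\to A_0(S)$, $(s_1,\dots,s_g)\mapsto \sum_j(s_j-i(s_j))$, factors through the Prym fibration $\bcP(\wt{\bcC}/\bcC)$, whose dimension $2g-1$ forces positive-dimensional fibers $F_s$; ampleness of $R$ makes the divisor $\sum_j \pr_j^{-1}R$ on $S^g$ meet such a fiber, producing a point $p\in R$ with $i(p)$ rationally equivalent to $p$; an induction then shows that $\Gamma_*$, with $\Gamma=\Delta_S-\Gr(i)$, has finite-dimensional image, so by Theorem \ref{theorem7} it factors through $\Alb(S)=0$, giving $i_*=\id$ on $A_0(S)$; combined with $i_*=-\id$ and Roitman's torsion theorem this forces $A_0(S)=0$, contradicting $p_g(S)>0$ via Mumford's theorem. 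You instead observe that the hypotheses make the statement nearly vacuous: since $S/i$ is an Enriques surface, $i$ is fixed-point-free, so the fixed-point clause is empty, and an $i$-invariant irreducible rational curve is impossible outright, because $i|_R$ lifts to the normalization $\PR^1$ by the universal property, and every automorphism of $\PR^1$ over $\CC$ (in either of your two cases) has a fixed point, whose image under the normalization map is a genuine fixed point of $i$ on $S$. This is a valid proof of a strictly stronger statement --- ampleness plays no role --- and your remark about singular points of $R$ is handled correctly. What the paper's heavier machinery buys is applicability in situations your argument cannot touch: when the anti-symplectic involution has fixed points (so the quotient is a rational surface rather than Enriques), a rational curve through a fixed point yields only rational equivalences $i(p)\sim p$ rather than actual fixed points, and there the finite-dimensionality-versus-Mumford contradiction is the real content; this is the setting of Voisin's original proposition, which your second, admittedly incomplete, Chow-theoretic sketch gestures toward but does not need to complete, since the direct argument already settles the theorem as stated.
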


\begin{proof}
Fix some very ample line bundle $L$ on $S'$. Then by adjunction formula and the fact that $2K_{S'}$ is trivial, we have
$$L^2\,=\,2g-2$$
where $g$ is the genus of a curve in the linear system of $L$. By Riemann-Roch we have that $H^0(C,L|_{C})$ is $g$ dimensional. Consider the exact sequence of line bundles:

$$0\to\bcO(C)\to \bcO(S')\to \bcO(S')/\bcO(C)\to 0$$

tensoring with $\bcO(-C)$ we get

$$0\to \bcO(S')\to \bcO(-C)\to \bcO(-C)|_{C}$$

Therefore we have the long exact sequence at the level of cohomology of the above sheaves. This gives a complex
$$0\, \longrightarrow\, \CC\, \longrightarrow\, H^0(S',\, L)\, \longrightarrow\, H^0(C,\,L|_C)\, \longrightarrow\, 0\, .$$
The sequence is exact on the right because $H^1(S',\,\bcO(S'))$ is zero as irregularity of $S'$ is zero.
Hence the dimension of $H^0(S',L)$ is $g+1$. Now consider a general curve $C$ in the linear system $|L|$. The
inverse image of $C$, say $\wt{C}$ is connected. Indeed, suppose that $\wt{C}$ has two components $C_1,C_2$. Then
$C_1^2>0,C_2^2>0$ and $C_1.C_2=0$. This means by the Hodge index theorem that $C_1=rC_2$, for some integer $r$.
Hence $C_1^2=0=C_2^2$. Therefore $(C_1+C_2)^2=0$, contradicting that $C_1+C_2$ is ample.

Now consider a general element of $S^g$, say $(s_1,\cdots,s_g)$ such that all $s_i$'s are distinct. The images of $s_i$'s under the quotient map from $S$ to $S'$ are contained in a unique curve $C$, if all the images of $s_i$'s are distinct. Let $\wt{C}$ be the curve containing $s_i$, for all $i$. Consider the map
$$S^g\, \longrightarrow\, A_0(S)$$
given by
$$(s_1,\,\cdots,\,s_g)\,\longmapsto \,\sum_j (s_j-i(s_j))\;.$$
The above tell us that the map is factoring through A natural map from $S^g$ to $\bcP(\wt{\bcC}/\bcC)$. Here $\bcC,\wt{\bcC}$, are the universal family of curves in the linear system $|L|$, the universal family of the double covers of the curves in the linear system respectively. $\bcP$ is the Prym fibration.
So given a tuple $(s_1,\,\cdots,\,s_g)$ consider the element $\sum_j\alb_{\wt{C}}(s_j-i(s_j))$  in the prym variety $P(\wt{C}/C)$. Now the dimension of $\bcP(\wt{\bcC}/\bcC)$ is $2g-1$, since dimension of $|L|$ is $g$ and dimension of the prym variety $P(\wt{C}/C)$ is $g-1$ (by the Riemann-Hurwitz formula). So  the fibers of the map $S^g\to \bcP(\wt{\bcC}/\bcC)$ are positive dimensional. Hence the fiber of the map  from $S^g$ to $A_0(S)$ are positive dimensional. Denote the fiber by $F_s$.

Suppose that there exists a rational curve $R$ on $S$, which is ample and invariant under $i$ or it passes through one of the fixed points of the involutions. Consider the divisor $\sum_i \pr_i^{-1}R$ on $S^g$. It is ample. So it intersects the curve $F_s$. So there exists a point $p$ on $R$ such that $i(p)$ is rationally equivalent to $p$  and $(p,s_1,\cdots,s_{g-1})$ is in $F_s$. So it all means that $\Gamma_*(S^g)=\Gamma_*(S^{g-1})$, where $\Gamma=\Delta_S-\Gr(i)$.

Now we prove by induction that $\Gamma_*(S^{g-1})=\Gamma_*(S^m)$ for all $m\geq g$.
So suppose that $\Gamma_*(S^k)=\Gamma^*(S^{g-1})$ for $k\geq g$, then we have to prove that $\Gamma_*(S^{k+1})=\Gamma_*(S^{g-1})$. So any element in $\Gamma_*(S^{k+1})$ can be written as  $\Gamma_*(s_1+\cdots+s_{g-1})+\Gamma_*(s)$. Now let $k-g+1=l$, then $g=k-l+1$. Since $k-l<k$, we have $k-l+1\leq k$, so $g\leq k$, so we have the cycle
$$\Gamma_*(s_1+\cdots+s_{g-1})+\Gamma_*(s)$$
supported on $S^k$, hence on $S^{g-1}$. So we have that $\Gamma_*(S^{g-1})=\Gamma_*(S^k)$ for all $k$ greater or equal than $g$. Now any element $z$ in $A_0(S)$, can be written as a difference of two effective cycle $z^+,z^-$ of the same degree. Then we have
$$\Gamma_*(z)=\Gamma_*(z^+)-\Gamma_*(z_-)$$
and $\Gamma_(z_{\pm})$ belong to $\Gamma_*(S^{g-1})$. So let $\Gamma'$ be the correspondence on $S^{2g-2}\times S$ defined as
$$\sum_{l\leq g-1}(pr_i,pr_{S})^*\Gamma-\sum_{g-1\leq l\leq 2g-2}(pr_i,pr_{S})^* \Gamma$$
where $\pr_i$ is the $i$-th projection from $S^{g-1}$ to $S$, and $\pr_{S}$ is from $S^i\times S$ to the last copy of $S$. Then we have
$$\im(\Gamma_*)=\Gamma'_*(S^{2g-2})\;.$$
This prove that image of $\Gamma_*$ is finite dimensional. But then by the previous Theorem \ref{theorem7}, we have that $i_*=\id$ on $A_0(S)$. But we know that $i_*=-\id$ on $A_0(S)$. Therefore $2z=0$, for all $z$ in $A_0(S)$. By the Roitman's torsion theorem we have that $z=0$, \cite{R2}. Hence $A_0(S)=0$, contradicting the fact that $A_0(S)$ is not isomorphism to the albanese variety of $S$.
\end{proof}

\section{The case of surfaces of general type with irregularity zero appearing as branched double covers of rational surfaces}

Let $S$ be a surface of general type with irregularity zero. Suppose that there exists an involution $i$ on $S$ such that $S/i$ is birational to the projective plane or to an Enriques surface. Examples of such surfaces are of the type numerical Godeaux or numerical Campedelli, that is when the geometric genus of $S$ is zero and self intersection of the canonical bundle of $S$ is either $1$ or $2$. Consider the isolated fixed points of the involution. Blow up $S$ along these points. Call the blow-up as $\wt{S}$. There is an involution on $\wt{S}$. The quotient of $\wt{S}$ by this involution is either a rational surface or an Enriques surface. Call it $\wt{S}/i$. Then consider a very ample line bundle on $\wt{S}/i$, that is we embed $\wt{S}/i$ in some projective space $\CC\PR^N$. Consider a generic pencil on $\wt{S}/i$ and pull it back to $\wt{S}$. We can ask whether the pull-back of a generic pencil to $\wt{S}$ is Lefschetz or not. So we prove the following theorem:

\begin{theorem}
Let $\wt{S}, \wt{S}/i$ be as above. Suppose that the pull-back of a generic pencil on $\wt{S}/i$ to $\wt{S}$ is a Lefschetz pencil. Then $\wt{S}$ admits of a hyper-elliptic fibration.
\end{theorem}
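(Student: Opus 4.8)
The plan is to show that, under the Lefschetz hypothesis, the general member $C_t$ of the pencil on $Y:=\wt{S}/i$ is a rational curve. Since $\wt{C_t}\to C_t$ is the double cover induced by the quotient map $\pi\colon \wt S\to Y$, rationality of $C_t$ forces $\wt{C_t}$ to be a double cover of $\CC\PR^1$, hence hyperelliptic; letting $t$ vary and blowing up the base locus of the pencil then exhibits $\wt S$ as a hyperelliptic fibration over $\CC\PR^1$.

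First I would establish that $J(\wt{C_t})$ is simple. Because $\wt S$ is a blow-up of a surface of irregularity zero, one has $H^1(\wt S,\QQ)=0$, and hence $H^3(\wt S,\QQ)=0$ by Poincar\'e duality; therefore the Gysin map $j_{t*}\colon H^1(\wt{C_t},\QQ)\to H^3(\wt S,\QQ)$ vanishes and its kernel is all of $H^1(\wt{C_t},\QQ)$. The Lefschetz hypothesis supplies a pencil $\mathcal P$ meeting $\bcD_{\wt S}$ transversely, so Theorem \ref{theorem8} applies and $\pi_1(U,0)$ acts irreducibly on $H^1(\wt{C_t},\QQ)$. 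Running the spreading-out argument of Theorem \ref{theorem5} verbatim—any proper nontrivial abelian subvariety of $J(\wt{C_t})$ would produce, after passing to a finite-index subgroup and applying the Picard--Lefschetz formula, a proper nonzero $\pi_1(U,0)$-submodule of $H^1(\wt{C_t},\QQ)$—then shows that no such subvariety exists, i.e. $J(\wt{C_t})$ is simple.

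Next I would bring in the involution. The involution $i$ preserves $\wt{C_t}$ and induces an order-two automorphism $i_*$ of $J(\wt{C_t})$, whose invariant part is isogenous to $\pi^*J(C_t)$ and whose anti-invariant part is the Prym variety $P(\wt{C_t}/C_t)$, with $J(\wt{C_t})\sim \pi^*J(C_t)\times P(\wt{C_t}/C_t)$. On a simple abelian variety an involution acts as $+1$ or $-1$, so one of these two factors must vanish. Since $\wt S$ is of general type while $Y$ is rational or Enriques, the double cover $\pi$ cannot be \'etale—an \'etale double cover of $Y$ would again be rational or would be a K3 surface—so $\pi$ is ramified along a nonempty divisor $B$; for the very ample $L$ defining the embedding one has $b:=L\cdot B>2$, whence $\dim P(\wt{C_t}/C_t)=g(C_t)-1+b/2>0$ by Riemann--Hurwitz, independently of $g(C_t)$. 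Thus $P(\wt{C_t}/C_t)\neq 0$, so by simplicity $i_*=-1$, forcing $\pi^*J(C_t)=0$ and hence $J(C_t)=0$: the curve $C_t$ is rational.

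With $C_t\cong\CC\PR^1$, the degree-two map $\wt{C_t}\to C_t$ realizes $\wt{C_t}$ as a hyperelliptic curve of genus $b/2-1$, with $i|_{\wt{C_t}}$ as its hyperelliptic involution; resolving the base locus of the pencil gives a morphism $\wt S\to \CC\PR^1$ with hyperelliptic general fiber, which is the asserted hyperelliptic fibration. I expect the main obstacle to be the first step: transporting the simplicity statement of Theorem \ref{theorem5} to the present double-cover situation, that is, checking that the irreducibility of Theorem \ref{theorem8} is genuinely available here (which rests on $H^3(\wt S,\QQ)=0$ together with the transverse pencil furnished by the Lefschetz hypothesis) and that the spreading-out and finite-index monodromy argument carries over without change. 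A secondary point is verifying that $\pi$ is ramified with $b>2$, so that the Prym is nonzero and $i_*$ is forced to act by $-1$ rather than $+1$.
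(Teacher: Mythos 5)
Your proposal is correct and runs on the same engine as the paper's proof: irreducibility of the monodromy action on $H^1(\wt{C_t},\QQ)$ (Theorem \ref{theorem8}, available here because irregularity zero gives $H^3(\wt{S},\QQ)=0$, so the vanishing cohomology is everything, and the Lefschetz hypothesis supplies the transverse pencil), the spreading-out/finite-index Picard--Lefschetz argument to promote an abelian subvariety to a monodromy-stable sub-Hodge structure, and Riemann--Hurwitz plus nonempty ramification to exclude the ``big'' case, ending with rationality of $C_t$ and hence hyperellipticity of $\wt{C_t}$. The difference is only in the packaging of the endgame. The paper does not prove simplicity of $J(\wt{C_t})$ in this section: it applies the spreading-out argument directly to the image $A_t$ of $J(C_t)\to J(\wt{C_t})$, obtaining $A_t=0$ or $A_t=J(\wt{C_t})$, and kills the second case by a genus count ($A_t=J(\wt{C_t})$ forces $g(C_t)=g(\wt{C_t})$, so by Riemann--Hurwitz the degree-two map $\wt{C_t}\to C_t$ would be unramified, contradicting the fact that the ample curve $C_t$ meets the branch divisor of $\wt{S}\to\wt{S}/i$). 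Your route --- simplicity of $J(\wt{C_t})$, the involution acting as $\pm 1$ on a simple abelian variety, and nonvanishing of the Prym --- is exactly the strategy the paper itself deploys in Section 4 for Enriques surfaces, and it is equivalent here, since $\pi^*J(C_t)=J(\wt{C_t})$ precisely when $P(\wt{C_t}/C_t)=0$, i.e.\ when the genera agree; your version buys a cleaner structural statement at no extra cost.

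One step of yours is, however, not justified as stated: the inequality $b=L\cdot B>2$. Very ampleness of $L$ and $B\neq\emptyset$ only give $b\geq 2$ (with $b$ even, as $B$ is divisible by two in the Picard group), and nothing in your argument rules out $b=2$. Fortunately you do not need the claim: if $g(C_t)=0$ you already have the desired conclusion, while if $g(C_t)\geq 1$ then $\dim P(\wt{C_t}/C_t)=g(C_t)-1+b/2\geq b/2\geq 1$, so the Prym is nonzero, simplicity forces $\pi^*J(C_t)=0$, and this contradicts $\dim \pi^*J(C_t)=g(C_t)\geq 1$. Replacing the unsupported ``$b>2$'' by this two-case split closes the proof, and brings it in line with the paper's own use of Riemann--Hurwitz.
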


\begin{proof}
So let $\wt{S}/i\to \mathcal P$ be a Lefschetz pencil, that is the singular fibers of this fibration can have at most one ordinary double point. Consider the composition $\wt{S}\to \wt{S}/i\to \mathcal P$. Let $t$ be a closed point on $\mathcal P$. Consider the fiber-product diagram,
$$
  \diagram
 \wt{C_t}\ar[dd]_-{} \ar[rr]^-{} & & C_t \ar[dd]^-{} \\ \\
  \wt{S} \ar[rr]^-{} & & \wt{S}/i
  \enddiagram
  $$
Since $C_t$ is ample on $\wt{S}/i$, we have by Hodge index theorem, that $\wt{C_t}$ is smooth and connected on $\wt{S}$. Consider the following commutative diagram at the level of algebraically trivial zero cycles modulo rational equivalence.

$$
  \diagram
 A_0(C_t)\ar[dd]_-{} \ar[rr]^-{} & & A_0(\wt{C_t}) \ar[dd]^-{j_{t*}} \\ \\
  A_0(\wt{S}/i) \ar[rr]^-{} & & A_0(\wt{S})
  \enddiagram
  $$
Suppose that the pull-back of a generic pencil to $\wt{S}$ is a Lefschetz pencil. Then the fundamental group $\pi_1(\mathcal P\setminus 0_1,\cdots,0_m,t)$ acts irreducibly on $H^1(\wt{C_t},\QQ)$. We have on the other hand $J(C_t)$ mapping to $J(\wt{C_t})$. Call its image as $A_t$. By the equivalence of weight one Hodge structures and abelian varieties there exists a Hodge structure $H_t$ of weight one in $H^1(\wt{C_t},\QQ)$. We prove that this Hodge structure is equivariant under the action of the fundamental group of $\mathcal P\setminus 0_1,\cdots,0_m$. Consider the base extension of $\CC$ to $\CC(x)$, that is to the field with transcendence degree one. The abelian varieties $A_t,J(\wt{C_t})$ are defined over $\Spec \CC$. Consider their pull-back under the morphism
$$\Spec (\CC(x))\to \Spec \CC\;.$$
Call them as $A_{t\CC(x)},J(\wt{C_t})_{\CC(x)}$. Let $L$ be the minimal finite extension of $\CC(x)$, such that these abelian varieties are defined over $L$. Let $\mathcal P'$ be a smooth projective curve with function field $L$. Then we have a finite map $\mathcal P'\to \mathcal P$. Consider the spread of $A_{t\CC(x)}, J(\wt{C_t})_{\CC(x)}$ over some Zariski open $U'$ in $\mathcal P'$. Call them $\bcA,\bcJ$. Then throwing out more points from $U'$ we can assume that $\bcA\to U'$ is smooth and proper and so is for $\bcJ\to U'$. So by Ehresmann's theorem we have a fibration in the sense of complex analytic topology. Therefore $\pi_1(U',t')$, should act on the cohomology of the fibers of $\bcA,\bcJ$. So we have an action of $\pi_1(U',t')$ on $H^{2d-1}(A_t,\QQ), H^1(\wt{C_t},\QQ)$, here $d$ is the dimension of $A_t$ and $t'$ maps to $t$. Now the group $\pi_1(U',t')$ is of finite index in $\pi_1(\mathcal P\setminus 0_1,\cdots,0_m,t)$. By the Picard Lefschetz formula and the finiteness of the index of the subgroup $\pi_1(U',t')$, we have that $H_t=H^{2d-1}(A_t,\QQ)$ is a $\pi_1(\mathcal P\setminus 0_1,\cdots,0_m)$ equivariant subspace in $H^1(\wt{C_t},\QQ)$. Hence $H_t$ is either zero or all of $H^1(\wt{C_t},\QQ)$ by the irreducibility of the monodromy action. Consequently we have that $A_t=0$ or $J(\wt{C_t})$.

Suppose that $A_t=J(\wt{C_t})$. This implies that genus of $C_t$ is greater than or equal to genus of $\wt{C_t}$. On the other hand we have $\wt{C_t}$ mapping dominantly onto $C_t$. So the genus of $C_t$ is less than or equal to genus of $\wt{C_t}$. Therefore genus of $C_t$ is equal to genus of $\wt{C_t}$. But by the Riemann-Hurwitz formula we have that the ramification locus has degree zero. But since $C_t$ is ample, it intersects the ramification locus of $\wt{S}\to \wt{S}/i$ in a set of points. Hence the ramification locus of the finite map $\wt{C_t}\to C_t$ cannot be empty. Therefore $A_t$ cannot be $J(\wt{C_t})$. So the only possibility is that $A_t=0$, in which case we have $C_t$ a rational curve. Therefore $\wt{C_t}$ is hyperelliptic.
\end{proof}

\section{Lefschetz pencils on  threefolds}
If instead of considering a surface and a finite map from the surface onto $\CC\PR^2$, we consider a smooth projective threefold $T$ and a finite map onto $\CC\PR^3$ and define a Lefschetz pencil on $T$, to be a rational map $T\to \CC\PR^3\dashrightarrow\CC\PR^1$, such that the singular fibers can have at most ordinary double points, then we can prove analogs of
the Theorem \ref{theorem1}, Lemma \ref{theorem3}, theorems \ref{theorem4},\ref{theorem8}. The most important in our context is theorem \ref{theorem8}.
Let for $H$ a hyperplane section of $\CC \PR^3$, $S_H$ is the pull-back of $H$ under the finite map from $T$ to $\PR^3$. Let $H^2(S_H,\CC)_{van}=\ker(H^2(S_H,\CC)\to H^4(T,\CC))$.

\begin{theorem}Suppose that $L$ is a very ample line bundle on $\CC\PR^3$ and $f:T\to \CC\PR^3$ is a finite map. Consider the ample line bundle $f^*L$  and the linear system associated to it. Suppose that a generic pencil on $T$ is a Lefschetz pencil. Suppose also that for $S_H\in |f^*L|$, we have $H^{2,0}(S_H)\cap H^2(S_H,\CC)_{van}\neq 0$. Then for a general $S_H$ in the linear system, the restriction map from $\Pic(T)$ to $\Pic(S_H)$ is onto.
\end{theorem}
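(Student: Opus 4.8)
The plan is to run the classical monodromy argument for Noether--Lefschetz type statements, adapted to the family $\pr_2:\bcS_U\to U$ of surfaces $S_H$ cut out on $T$ by the linear system $|f^*L|$. By the Lefschetz $(1,1)$-theorem, surjectivity of the restriction $\Pic(T)\to\Pic(S_H)$ on the N\'eron--Severi level is equivalent to the assertion that every rational Hodge class in $H^2(S_H,\QQ)$ lies in the image of $j^*:H^2(T,\QQ)\to H^2(S_H,\QQ)$, where $j:S_H\hra T$; the $\Pic^0$-part is taken care of by the Lefschetz hyperplane theorem, which gives an isomorphism $H^1(T,\QQ)\stackrel{\sim}{\to}H^1(S_H,\QQ)$ of Hodge structures. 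First I would record the orthogonal decomposition of Hodge structures $H^2(S_H,\QQ)=j^*H^2(T,\QQ)\oplus H^2(S_H,\QQ)_{\van}$, in which the vanishing part is exactly $\ker(j_*:H^2(S_H)\to H^4(T))$ and, by the projection formula, the orthogonal complement of $\im j^*$ for the cup--product form; both summands are sub-Hodge structures because $j^*$ and $j_*$ are morphisms of Hodge structures. Thus it suffices to show that for very general $H$ the vanishing cohomology carries no nonzero rational Hodge class, that is $H^{1,1}(S_H)\cap H^2(S_H,\QQ)_{\van}=0$.

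Next I would invoke the threefold analogue of Theorem \ref{theorem8} announced at the start of this section: since a generic pencil on $T$ is assumed to be a Lefschetz pencil, $\pi_1(U,0)$ acts irreducibly on $H^2(S_H,\QQ)_{\van}$, which moreover is nonzero because $H^{2,0}(S_H)\cap H^2(S_H,\CC)_{\van}\neq 0$ by hypothesis. These vanishing cohomologies organize into a polarized variation of Hodge structure $\bcH$ of weight $2$ over $U$, with $\bcH^{2,0}\neq 0$ and with irreducible $\QQ$-local system. The core of the argument is then the following N\'eron--Lefschetz density statement, which I would isolate as a lemma: for very general $t\in U$ one has $H^{1,1}(S_t)\cap(\bcH_t)_{\QQ}=0$. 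The Hodge locus is a countable union of closed analytic subsets $\mathrm{NL}_\lambda=\{t:\lambda_t\in F^1\bcH_t\}$, one for each integral class $\lambda$ in the vanishing part (even algebraic, by Cattani--Deligne--Kaplan). Each $\mathrm{NL}_\lambda$ is either proper in $U$ or equal to $U$, and it suffices to rule out the latter. If $\mathrm{NL}_\lambda=U$ for some $\lambda\neq 0$, then the flat transports $\gamma\cdot\lambda$, $\gamma\in\pi_1(U,0)$, are flat sections that are of type $(1,1)$ at every point, so their rational span defines a monodromy-invariant $\QQ$-sub-local system $\bcV\subseteq\bcH$ which, by Deligne semisimplicity and the theorem of the fixed part, is a sub-variation of Hodge structure of pure type $(1,1)$, i.e. $\bcV^{2,0}=0$. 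As $\bcV\neq 0$, irreducibility of $\bcH$ forces $\bcV=\bcH$, whence $\bcH^{2,0}=0$, contradicting the hypothesis. Hence every $\mathrm{NL}_\lambda$ is proper and the complement of the countable union is the desired very general locus.

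Putting the pieces together, for very general $H$ every Hodge class on $S_H$ has vanishing component $0$, hence lies in $j^*H^2(T,\QQ)$, so the restriction map on N\'eron--Severi groups is surjective; together with the isomorphism on $H^1$ this yields surjectivity of $\Pic(T)\to\Pic(S_H)$. The main obstacle is the density lemma of the previous paragraph: it is exactly here that the hypothesis $H^{2,0}(S_H)\cap H^2(S_H,\CC)_{\van}\neq 0$ is indispensable, for without it the whole vanishing part would be of type $(1,1)$, every $\mathrm{NL}_\lambda$ would equal $U$, and the conclusion would fail; moreover one needs the semisimplicity of the monodromy representation together with the theorem of the fixed part to upgrade ``Hodge everywhere'' to ``sub-variation of Hodge structure.'' A secondary subtlety is that the argument naturally produces the conclusion only on the very general locus, namely the complement of a countable union of proper closed subvarieties, which we read as the ``general $S_H$'' of the statement, and that the $\Pic^0$-contribution must be checked separately via the Lefschetz hyperplane theorem.
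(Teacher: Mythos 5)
Your argument is correct in substance, but it is a genuinely different implementation of the monodromy argument than the one the paper gives. The paper follows Voisin's Hilbert-scheme proof of Noether--Lefschetz (Theorem 3.33 in \cite{Vo}): it introduces the relative Hilbert schemes $H_{i,U}=\{(Z,H)\,:\,Z\subset S_H\}$, assumes some component dominates $U$, passes to a multisection finite over $U$, and lets the classes of the resulting curves $Z_{i,H}$ together with $NS(T)|_{S_H}\otimes \QQ$ generate a sub-local system $F$ of $R^2\phi_*\QQ$; intersecting with the vanishing local system gives $F'$, and irreducibility (the threefold analogue of Theorem \ref{theorem8}) forces $F'=0$ or $F'=R^2\phi_*\QQ_{van}$ --- the latter would make the vanishing cohomology generated by classes of type $(1,1)$, contradicting $H^{2,0}(S_H)\cap H^2(S_H,\CC)_{van}\neq 0$, while the former places every curve class in $NS(T)|_{S_H}\otimes\QQ$. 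You instead run the Hodge-locus version: countably many loci $\mathrm{NL}_\lambda$, and if one of them equals $U$, the monodromy orbit of the flat class $\lambda$ spans a nonzero $\QQ$-sub-local system that is of type $(1,1)$ at every point, which by the same irreducibility must be all of the vanishing local system, again contradicting the $H^{2,0}$ hypothesis. The two proofs rest on exactly the same two pillars (irreducibility of the monodromy on vanishing cohomology, plus $H^{2,0}\cap H^2_{van}\neq 0$) and differ in how the countability and the ``extra class spreads over all of $U$'' step are organized: the Hilbert schemes give algebraic loci and algebraic classes for free, which is why Voisin (and the paper) choose them, whereas your route needs the standard but analytic facts that the Hodge loci are countable in number and that flat transport inside a Hodge locus preserves being a Hodge class. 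Two further remarks: your appeal to Deligne semisimplicity and the theorem of the fixed part is unnecessary, since the $\QQ$-span of a monodromy orbit of classes that are $(1,1)$ at every point is already a monodromy-invariant sub-local system of pure type $(1,1)$; and your explicit handling of the $\Pic^0$-part via the Lefschetz hyperplane theorem is a point where you are more careful than the paper, which only discusses N\'eron--Severi classes and, like you, really obtains surjectivity with $\QQ$-coefficients (i.e.\ up to torsion and finite index).
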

\begin{proof}
The argument of this theorem follows the idea of the proof of Theorem 3.33 in \cite{Vo}.
Consider the Hilbert schemes $H_{i,U}$, given by
$$\{(Z,H):Z\subset S_H\}\subset H_i\times U$$
where $H_i$'s are Hilbert schemes parametrising one dimensional subschemes on $T$. So what we want to prove that $H_{i,U}$ to $U$ is not dominant unless we have the class of $Z$ from $H_{i,U}$, in $NS(T)|_{S_H}\otimes \QQ$.
Assume that $H_{i,U}$ to $U$ is dominant. We can consider an embedding of $H_{i,U}$ into some projective space and consider a smooth multi-section of it which maps finitely onto $U$ (may be we have to replace $U$ by a smaller Zariski open subset). Consider $H\in U$. Then the fiber $\pr_2^{-1}(H)$ parametrises curves $Z_{1,H},\cdots,Z_{n,H}$ incident on $S_H$. The classes of these curves together with $NS(T)_{S_H}\otimes \QQ$ generate a sub-local system of the local system ${R^2\phi_*\QQ}$. Here $\phi:\bcT_U\to U$ is the universal hypersurface given by
$$\{(x,H):f(x)\in H\}\;.$$
 Call the above local subsystem as $F$. Consider the intersection of $F$ with $R^2\phi_*\QQ_{van}$ (this is the local system of vanishing cohomology groups). Call it $F'$. Since $R^2\phi_*\QQ|_{van}$ is indecomposable by theorem \ref{theorem8}, we have $F'=0$ or all of $R^2\phi_*\QQ|_{van}$. In the second case we have the group $H^2(S_H,\QQ)_{van}$ is generated by divisor classes , which are $(1,1)$ type in the Hodge decomposition. Hence $H^2(S_H,\QQ)_{van}\cap H^{2,0}(S_H)=0$, contradicting our assumption.

 In the first case the classes $Z_{i,H}$ belongs to $NS(T)|_{S_H}\otimes \QQ$.  By the Lefschetz theorem for $(1,1)$ classes the orthogonal complement of $NS(T)|_{S_H}\otimes \QQ$ in the vector space $\langle Z_{i,H}, NS(T)|_{S_H}\otimes \QQ\rangle$  with respect to the intersection pairing on $H^2(S_H,\QQ)$
 is contained in $H^2(S_H,\QQ)_{van}$, hence in $F'$. Since $F'=0$ we conclude that $Z_{i,H}$ is in $NS(T)|_{S_H}\otimes \QQ$.
\end{proof}

\end{document}